\def\la#1{\hbox to #1pc{\leftarrowfill}}
\def\ra#1{\hbox to #1pc{\rightarrowfill}}
\def\fract#1#2{\raise4pt\hbox{$ #1 \atop #2 $}}
\def\bbg{\mathcal{R}e}
\def\bbS{{S}}
\def\bbs{{ S^{\ast}}}
\def\H2{\hbox{{\rm H}\kern-0.9em\hbox{{\rm I}\ \ }}^2}
\def\h{\hbox{{\rm H}\kern-0.9em\hbox{{\rm I}\ \ }}}
\def\T{\hbox{{\rm T}\kern-0.66em\hbox{{\rm I}\ \ }}}
\def\R{\hbox { {\rm R}\kern-0.9em\hbox{{\rm I}\ }}}
\def\C{\hbox { {\rm C}\kern-0.56em\hbox{{\rm I}\ }}}
\theoremstyle{definition}
\theoremstyle{remark}
\numberwithin{equation}{section}
\newtheorem{theorem}{\textbf{Theorem}}[section]
\newtheorem{corollary}[theorem]{\textbf{Corollary}}
\newtheorem{lemma}[theorem]{\textbf{Lemma}}
\newtheorem{proposition}[theorem]{\textbf{Proposition}}
\newtheorem{remark}{\textbf{Remark}}[section]
\begin{document}
\title{On the numerical radius of the truncated adjoint Shift}
\author{Haykel GAAYA}
\address{\ddag.Institute Camille Jordan, Office 107 University of Lyon1,
43 Bd November 11, 1918, 69622-Villeurbanne, France.}
\email{\ddag  gaaya@math.univ-lyon1.fr }
\subjclass[2000]{47A12, 47B35}
\keywords{Numerical radius, Numerical range, Truncated shift, Eigenvalues, Toeplitz forms, Inequalities for positive trigonometric polynomials}
\maketitle
\begin{abstract}
A celebrated thorem of Fejer (1915) asserts that for a given positive trigonometric polynomial
 $\sum_{j=-n+1}^{n-1}c_{j}e^{ijt}$, we have $\lvert c_{1}\lvert\leqslant c_{0}\cos\frac{\pi}{n+1}$. A more recent inequality due to U. Haagerup and P. de la Harpe \cite{Haagerup} asserts that, for any contraction $T$ such that $T^{n}=0$, for some $n\geq2$, the inequality $\omega_{2}(T)\leqslant\cos\frac{\pi}{n+1}$ holds, and $\omega_{2}(T)=\cos\frac{\pi}{n+1}$ when T is unitarily equivalent to the extremal operator ${\bbS}^{\ast}_{n}={\bbs}_{\lvert{\C}^{n}}={\bbs}_{\lvert Ker (u_{n}(\bbs))}$ where $u_{n}(z)=z^{n}$ and $\bbs$ is the adjoint of the shift operator on the Hilbert space of all square summable sequences. Apparently there is no relationship between them. In this mathematical note, we show that there is a connection between Taylor coefficients of positive rational functions on the torus and numerical radius of the extremal operator $\bbs(\phi)=\bbs_{\lvert Ker(\phi(\bbs))}$ for a precise inner function $\phi$. This result completes a line of investigation begun in 2002 by C. Badea and G. Cassier \cite{Cassier}. An upper and lower bound of the numerical radius of $\bbs(\phi)$ are given where $\phi$ is a finite Blashke product with unique zero.
\end{abstract}

\section{Introduction}
\hspace{0.5cm}Let $\mathcal{H}$ be a complex separable Hilbert space and $\mathcal{B(H)}$ the collection of all
 bounded linear operators on $\mathcal{H}$. The numerical range of an operators $T$ in $\mathcal{B(H)}$ is the subset $$W(T)=\left\lbrace <Tx,x>\in\C;x\in \mathcal{H} ,\lVert x \lVert= 1\right\rbrace $$  of the plane, where $<.,.>$ denotes the inner product in $\mathcal{H}$ and the numerical radius of $T$ is defined by $$ \omega_{2}(T)=\sup \left\lbrace \lvert z\lvert; z\in W(T) \right\rbrace .$$
$\mathcal{R}e(T)$ is the self-adjoint operator defined by $$\bbg(T)=\frac{1}{2}(T+T^{\ast}).$$ We denote by $\left[ x\right]$ the integer part of $x$ and by  $\bbS$ the unilateral shift acting on the Hardy space $\H2$ of the square summable analytic functions and by $\bbs$ its adjoint:
$$\begin{array}{ccccc}
\bbS & : & \H2 & \to & \H2 \\
& & f & \mapsto & zf(z) \\
\end{array}$$

$$\begin{array}{ccccc}
\bbs & : & \H2 & \to & \H2 \\
& & f & \mapsto & \dfrac{f(z)-f(0)}{z} \\
\end{array}.$$

\hspace{0.5cm}Beurling's theorem implies that the non zero invariant subspaces of $\bbS$ are of the forme $\phi~\H2$, where $\phi$ is some inner function . Let $\bbS(\phi)$ denote the compression of $\bbS$ to the space $ H(\phi)=\H2\ominus\phi~\H2 $ :
$$\bbS(\phi)f(z)=P(zf(z)),$$ where $P$ denotes the ortogonal projection from $\H2$ onto $ H(\phi)$. We denote by $\bbs(\phi)$ the adjoint of $\bbS(\phi)$: $$\bbs(\phi)=\bbS(\phi)^{\ast}={\bbs}_{\lvert H(\phi)}=\bbs_{\lvert Ker(\phi(\bbS)^{\ast})} ~.$$ 

\hspace{0.5cm}The model operator $\bbS(\phi)$ has many properties and it was studied intensively in the 1960s and '70s. For exemple, it has norm 1 (for dim  $H(\phi)>1$) and it is cyclic. The function $\phi$ is the minimal function of $\bbS(\phi)$ meaning that $\phi(\bbS(\phi))=0$ and $\phi$ divides any function $\psi$ in $H^{\infty}$ with $\psi(\bbS(\phi))=0$. The space $H(\phi)$ is  finite-dimensional exactly when $\phi$ is a finite Blaschke product : $$\phi(z)=\prod_{j=1}^{n}\dfrac{z-\alpha_{j}}{1-\overline{\alpha_{j}}z}.$$
In this case the polynomial $p(z)=\prod_{j=1}^{n}(z-\alpha_{j})$ is both the minimal and characteristic polynomial of $\bbS(\phi)$ and $(\alpha_{j})_{1\leqslant j\leqslant n}$ are its eigenvalues. In particular, if $\phi(z)=z^{n}$ then $\bbS(\phi)$ is unitarily equivalent to $\bbS_{n}$ where 

$$
\bbS_{n}=\left(
\begin{array}{cccc}
0 &       &       &  \\
1 & \ddots &      &   \\
  & \ddots &\ddots    &  \\
  &        & 1    & 0 
  
\end{array}
\right)
.$$
\hspace{0.5cm}In 1992 U. Haagerup and P. de la Harpe proved that $W(\bbS_{n})$ is 
the disc\\ $D_{n }=\left\lbrace z\in \C ; \lvert z\lvert \leqslant \cos\frac{\pi}{n+1}\right\rbrace$ and 
\begin{equation}
\omega_{2}(\bbS_{n})=\cos\frac{\pi}{n+1}
\end{equation}
and more generally a natural connection between Fejer's inequality and the numerical radius of a nilpotent matrix was established by Haagerup and de la Harpe. They proved, using solely elementary methods (positive definite kernels) that:
\begin{theorem}[\cite{Haagerup}]
\textit{ Let $T$ be an operator on $\mathcal{H}$ such that $T^{n}=0$ for some $n \geq 2$. One has:$$\omega_{2}(T)\leqslant \lVert T\lVert \cos\frac{\pi}{n+1}$$
and $\omega_{2}(T)=\lVert T\lVert \cos\frac{\pi}{n+1}$ when $T$ is unitarily equivalent to $\lVert T\lVert \bbS_{n}$}.
\end{theorem}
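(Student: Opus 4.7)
The plan is to reduce the inequality to Fej\'er's bound on positive trigonometric polynomials (recalled in the abstract) via a Gram-type positive-definite kernel built from the orbit $\{T^{*k}x\}_{k=0}^{n-1}$ of an almost-extremal unit vector. By homogeneity I may assume $\lVert T\lVert=1$, so that the goal becomes $\omega_{2}(T)\leqslant\cos\frac{\pi}{n+1}$. Using $\omega_{2}(T)=\sup_{\lVert x\lVert=1}\lvert\langle Tx,x\rangle\lvert$ and rotating $T$ by a unimodular scalar (which preserves both $T^{n}=0$ and $\lVert T\lVert$), fix a unit vector $x$ with $\langle Tx,x\rangle$ real non-negative and arbitrarily close to $\omega_{2}(T)$. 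Because $T^{n}=0$, the subspace $\mathcal{K}_{x}=\mathrm{span}\{T^{j}T^{*k}x:0\leqslant j,k\leqslant n-1\}$ is finite-dimensional, so the question reduces to linear algebra on some $\mathbb{C}^{N}$ with $N\leqslant n^{2}$.

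Next, consider the vector polynomial $F(z)=\sum_{k=0}^{n-1}z^{k}T^{*k}x$, which is a genuine polynomial because $T^{*n}=0$. Then
\[ p(t):=\lVert F(e^{it})\lVert^{2}=\sum_{j,k=0}^{n-1}e^{i(k-j)t}\langle T^{*k}x,T^{*j}x\rangle \]
is a non-negative trigonometric polynomial of degree $\leqslant n-1$. Fej\'er's inequality $\lvert c_{1}\lvert\leqslant c_{0}\cos\frac{\pi}{n+1}$ then reads
\[ \Big\lvert\sum_{k=0}^{n-2}\langle T^{*(k+1)}x,T^{*k}x\rangle\Big\lvert\leqslant\cos\tfrac{\pi}{n+1}\cdot\sum_{k=0}^{n-1}\lVert T^{*k}x\lVert^{2}, \]
a \emph{weighted} numerical-radius bound involving the positive operator $Q_{x}:=\sum_{k=0}^{n-1}T^{*k}T^{k}\geqslant I$.

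The main obstacle is then to eliminate the weight $Q_{x}$ and recover the sharp inequality $\lvert\langle Tx,x\rangle\lvert\leqslant\cos\frac{\pi}{n+1}$. My plan is to reapply the construction above with $x$ replaced by a unit vector $u$ built from $x$ and the top eigenvector $(\sin\frac{k\pi}{n+1})_{k=1}^{n}$ of $\bbg(\bbS_{n})$, for which the weighted sum collapses to a scalar multiple of $\langle Tx,x\rangle$; equivalently one may use the positive-definite kernel formulation of Haagerup and de la Harpe together with the dual identity $\omega_{2}(\bbS_{n})=\cos\frac{\pi}{n+1}$ --- a direct matrix calculation on a Jordan block --- to close the argument. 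For the equality case, uniqueness of the Fej\'er extremal polynomial (whose Fej\'er--Riesz factor has coefficients $\sin\frac{k\pi}{n+1}$) forces the orbit $(T^{*k}x)_{k=0}^{n-1}$ to be orthogonal with those prescribed norms, identifying $T\vert_{\mathcal{K}_{x}}$ isometrically with $\lVert T\lVert\bbS_{n}$; a cyclicity argument then extends the equivalence to all of $\mathcal{H}$.
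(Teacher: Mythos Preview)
First, note that the paper does not supply its own proof of this theorem: it is quoted from \cite{Haagerup} with only the comment that Haagerup and de la Harpe ``proved, using solely elementary methods (positive definite kernels)'' the result. The paper's independent contribution in this direction is merely the special value $\omega_{2}(\bbS_{n})=\cos\frac{\pi}{n+1}$, recovered in Corollary~3.9 as the case $\alpha=0$ of Theorem~3.7; the inequality for an arbitrary nilpotent contraction is not reproved here. So there is no detailed argument in the paper to compare your proposal against beyond that one phrase.

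On its own merits, your outline has a genuine gap precisely at the unweighting step, and your proposed fixes do not close it. Applying Fej\'er to $p(t)=\lVert\sum_{k}e^{ikt}T^{*k}x\rVert^{2}$ indeed yields the weighted bound you wrote, but both sides carry extra terms and there is no monotonicity allowing you to discard them. Replacing $x$ by $u=\sum_{k} a_{k}T^{*(k-1)}x$ does \emph{not} make the left side collapse to a scalar multiple of $\langle Tx,x\rangle$: since $T$ and $T^{*}$ do not commute, the inner products $\langle T^{*(k+1)}u,T^{*k}u\rangle$ expand into arbitrary words in $T,T^{*}$ applied to $x$, not a single one. And invoking ``the positive-definite kernel formulation of Haagerup and de la Harpe'' is circular, since that \emph{is} the theorem. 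What is missing is an essential use of the contraction hypothesis via the defect operator. From $T^{n}=0$ one has the telescoping identity $I=\sum_{k=0}^{n-1}T^{*k}(I-T^{*}T)T^{k}$, and hence also $T=\sum_{k}T^{*k}(I-T^{*}T)T^{k+1}$ and $T^{*}=\sum_{k}T^{*(k+1)}(I-T^{*}T)T^{k}$. Combining these with the positive semidefiniteness of the tridiagonal matrix $M=2\cos\frac{\pi}{n+1}\,I_{n}-\bbS_{n}-\bbS_{n}^{*}$ (write $M_{kl}=\sum_{j}\overline{b_{jk}}b_{jl}$) one obtains directly
\[
2\cos\tfrac{\pi}{n+1}\,I-T-T^{*}
=\sum_{j}\Bigl(\sum_{k=0}^{n-1}b_{jk}T^{k}\Bigr)^{*}(I-T^{*}T)\Bigl(\sum_{k=0}^{n-1}b_{jk}T^{k}\Bigr)\ \ge\ 0,
\]
which is exactly $\omega_{2}(T)\le\cos\frac{\pi}{n+1}$ after rotating $T$. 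This operator identity is the ``positive definite kernel'' step the paper alludes to, and it---not a second application of Fej\'er to a cleverly chosen vector---is what your argument lacks.
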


\ \hspace{1cm}The reader may consult \cite{Halmos} chapter 22 for properties of numerical ranges of operators in general, \cite{Horn} chapter 1 for those of finite dimensional operators and particulary \cite{Wu2} for the geometric properties of the numerical range of $\bbS(\phi)$.

\underline{Organisation of the paper:}

In \cite{Cassier}, C. Badea and G. Cassier showed: 
\begin{theorem}(\cite{Cassier})
 \textit{Let $F=P/Q$ be a rational function with no principal part $(d^{\circ}P < d^{\circ}Q)$ which is positive on the torus. 
Then the Taylor coefficient $c_{k}$ of order $k$ satisfies the following inequalitiy $$\lvert c_{k}\lvert\leqslant c_{0}~\omega_{2}(R^{k}),$$ where $R={\bbs}_{\lvert Ker(Q(\bbs))}$.}
\end{theorem}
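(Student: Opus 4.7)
The plan is to produce a single vector $h \in \mathrm{Ker}(Q(\bbs))$ satisfying the two properties $c_k = \langle R^k h, h\rangle$ and $\|h\|^2 = c_0$; the desired estimate
\[
|c_k| = |\langle R^k h, h\rangle| \leq \omega_2(R^k)\,\|h\|^2 = c_0\,\omega_2(R^k)
\]
will then be immediate from the very definition of the numerical radius.

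First I would identify $\mathrm{Ker}(Q(\bbs))$ with the model space $H(\phi)$, where $\phi$ is the finite Blaschke product whose zeros (with multiplicity) are the zeros $\alpha_j$ of $Q$ lying in $\mathbb{D}$. Since $F$ is real on the torus, the poles of $F$ occur in symmetric pairs $\{\alpha_j, 1/\overline{\alpha_j}\}$, so one can write $Q(z) = c\,Q_i(z)\,\widetilde{Q_i}(z)$ with $Q_i(z) = \prod_j(z-\alpha_j)$ and $\widetilde{Q_i}(z) = \prod_j(1-\overline{\alpha_j}z)$, and $\phi = Q_i/\widetilde{Q_i}$ up to a unimodular constant. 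Because $\widetilde{Q_i}^{\pm 1} \in H^{\infty}$, this gives $\phi H^2 = Q_i H^2 = Q H^2$, hence $\mathrm{Ker}(Q(\bbs)) = (QH^2)^{\perp} = H(\phi)$ and $R = \bbs_{|H(\phi)}$.

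Next I would produce $h$ via a Fej\'er--Riesz-type factorization. Using $|Q_i|^2 = z^{-n}Q_i\widetilde{Q_i}$ on the torus (where $n = \deg\phi$), a short algebraic manipulation shows $F(z)|Q_i(z)|^2$ reduces on $\mathbb{T}$ to a nonnegative Laurent polynomial of lowest and highest frequencies bounded by $n-1$ in absolute value. The classical Fej\'er--Riesz theorem then yields a polynomial $p$ with $\deg p < n$ such that $F|Q_i|^2 = |p|^2$ on $\mathbb{T}$. Setting $h := p/\widetilde{Q_i}$ and using $|Q_i| = |\widetilde{Q_i}|$ on $\mathbb{T}$ gives $F = |h|^2$ on $\mathbb{T}$, while the degree constraint $\deg p < n$ places $h$ in $H(\phi)$ via the standard parametrization $H(\phi) = \{p/\widetilde{Q_i} : \deg p < n\}$.

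The remainder is coefficient bookkeeping. Expanding $h = \sum_{j\geq 0} h_j z^j$ in $H^2$, the convolution identity for $|h|^2$ on $\mathbb{T}$ gives $c_k = \sum_{j\geq 0} h_{j+k}\overline{h_j}$ for $k \geq 0$, and in particular $c_0 = \sum_j |h_j|^2 = \|h\|_{H^2}^2$. Since $H(\phi)$ is $\bbs$-invariant (as the orthogonal complement of the $\bbS$-invariant subspace $\phi H^2$), the restriction satisfies $R^k h = \bbs^k h$, whose $j$-th Taylor coefficient is $h_{j+k}$; therefore $\langle R^k h, h\rangle = \sum_{j\geq 0} h_{j+k}\overline{h_j} = c_k$, which closes the loop. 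The main obstacle I anticipate is the factorization step, specifically arranging the degree of $p$ to be \emph{strictly} less than $n$ so that $h$ lands in $H(\phi)$ and not in a strictly larger model space; the rest of the argument then reduces to the defining inequality $|\langle Ax,x\rangle| \leq \omega_2(A)\|x\|^2$ applied to $A = R^k$ and $x = h$.
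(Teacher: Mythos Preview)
Your proposal is correct and follows essentially the same route as the paper. The paper does not prove Theorem~1.2 separately (it is quoted from \cite{Cassier}); its proof of the generalization Theorem~2.1 has exactly your three-step structure: identify the model space, factor $F=|h|^{2}$ on $\mathbb{T}$ with $h$ in that space, and then read off $c_{k}=\langle (\bbs)^{k}h,h\rangle$ to get $|c_{k}|\leq c_{0}\,\omega_{2}(R^{k})$. The only methodological difference is in the factorization step: the paper first checks that $\varphi F\in H^{1}_{0}$ and then invokes the abstract Lemma~2.2, whereas you perform the factorization by hand via Fej\'er--Riesz applied to the trigonometric polynomial $F|Q_{i}|^{2}$. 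Your version is more explicit and self-contained; the paper's lemma is more flexible and is what allows the extension to $\deg P\geq\deg Q$ in Theorem~2.1.

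One small slip worth noting: the identity $\mathrm{Ker}(Q(\bbs))=(QH^{2})^{\perp}$ is not quite right, since $Q(\bbs)^{*}=\overline{Q}(\bbS)$ gives $\mathrm{Ker}(Q(\bbs))=(\overline{Q}H^{2})^{\perp}$; thus $\mathrm{Ker}(Q(\bbs))$ is the model space associated with the Blaschke product whose zeros are the $\overline{\alpha_{j}}$, not the $\alpha_{j}$. This does not affect the inequality, because the two compressed shifts are anti-unitarily equivalent and hence have the same numerical radius, but your $h=p/\widetilde{Q_{i}}$ as written lies in $H(\phi)$ rather than in $\mathrm{Ker}(Q(\bbs))$; either conjugate the construction or note the invariance of $\omega_{2}$ to close the gap.
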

\hspace{0.5cm}In Section 2, our main theorem is the Theorem 1.2. We give an extension of the result of C. Badea and G. Cassier 
for Taylor coefficients of all rational functions which are positive on the torus. We make no extra assumptions about 
$P$ and $Q$. We do not, for exemple require them to obey any degree restrictions, they need only be coprime. The theorem has 
many applications and will explain how we can easily recover the remarkable Egerváry and Szász inequality. See Corollary 2.3.

\hspace{0.5cm}Toeplitz matrices are found in several areas of mathematics and its applications such as complex and harmonic analysis. 
The KMS Toeplitz matrix 
$$K_{n}(\alpha)=(\alpha^{\lvert r-s\lvert})^{n}_{r,s=1}$$ associated with the Poisson kernel introduced by Kac, Murdokh and Szegö \cite{Kac} is of particular interst in these areas. In order to formulate our problem we first review in Section 3.1 some of the known results on the spectra of these matrices and we give a better upper bound of $\lambda_{1}^{(n)}$, where $\lambda_{1}^{(n)}$ is the largest eigenvalue of the KMS matrix. In Section 3.2, our main theorem is the Theorem 3.7 which gives un upper and lower bound of the numerical radius of the truncated shift $\bbs(\phi)$ where $\phi(z)=\left( \dfrac{z-\alpha}{1-\overline{\alpha}z}\right)^n $ with $0\leqslant\alpha<1$ is a finite Blashke product with unique zero. Our preoccupation with this particular case $0\leqslant\alpha<1$ to the exclusion to any $\alpha$ in the disc is explained by the fact that the numerical radius is independent with the argument of $\alpha$. This formula is expressed in terms of eigenvalues of the KMS matrices and provides an easy proof for the Haagerup and de la Harpe result (1.1). 
\section{Main Theorem}
\hspace{0.5cm}There are many classical inequalities for coefficients of positive trigonometric polynomials. 
The next result shows the links between the numerical radius of the truncated adjoint shift and the Taylor coefficients of rational functions positive on the torus.
\begin{theorem}
\textit{Let $F=P/Q$ be a rational function which is positive on the torus, where ${P}$ and ${Q}$ are coprime. Denote by $$\phi(z)=\prod_{j=1}^{p}\left( \dfrac{z-\alpha_{j}}{1-\overline{\alpha_{j}}z}\right)^{m_{j}} $$ and $$\psi(z)=\prod_{j=1}^{q}\left( \dfrac{z-\beta_{j}}{1-\overline{\beta_{j}}z}\right)^{d_{j}} $$ the respectively finite Blashke products formed by the nonzero roots of $P$ and $Q$ in the open disc, let $m=\sum_{j=1}^{p}m_{j}$ and $d=\sum_{j=1}^{q}d_{j}$. Then the Taylor coefficient $c_{k}$ of order $k$ of $F$ satisfies the following inequality:
 \begin{eqnarray*}
  \lvert c_{k} \lvert \leqslant c_{0}~\omega_{2} ({\bbs}^{k}(\varphi)),~ \mbox{where}~ \varphi(z)=z^{\max(0,m-d+1)}\psi(z).
 \end{eqnarray*}
}
\end{theorem}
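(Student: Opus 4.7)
The strategy is to realise $F$ as $|h|^2$ on the unit circle for a suitable $h\in H(\varphi)$, so that every Taylor coefficient $c_k$ of $F$ appears as a diagonal matrix coefficient $\langle \bbs(\varphi)^k h, h\rangle$. Normalising $h$ to a unit vector will then yield the desired inequality immediately from the definition of the numerical radius. The plan has two real steps: produce the Fej\'er--Riesz factor of $F$ explicitly, and identify it as an element of $H(\varphi)$ by a degree count.

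For the factorisation, because $F=P/Q$ is positive on the torus, the identity $F(z) = \overline{F(1/\bar z)}$ forces the roots of $P$ and $Q$ outside the unit disc to be the reciprocal conjugates of their inside roots. Writing $A(z) = \prod_j(z-\alpha_j)^{m_j}$ and $B(z) = \prod_j(z-\beta_j)^{d_j}$ (of degrees $m$ and $d$ respectively), together with the reflected polynomials $A^{\#}(z) = \prod_j(1-\overline{\alpha_j}z)^{m_j}$ and $B^{\#}(z) = \prod_j(1-\overline{\beta_j}z)^{d_j}$, one obtains
\begin{equation*}
P = \kappa_P\, z^{s_P}\, A\, A^{\#},\qquad Q = \kappa_Q\, z^{s_Q}\, B\, B^{\#},
\end{equation*}
with $\kappa_P,\kappa_Q>0$, $s_P s_Q = 0$ (by coprimality), and $s_P - s_Q = d - m$ (forced by positivity on the torus). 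Set $h(z) := \sqrt{\kappa_P/\kappa_Q}\,A(z)/B^{\#}(z)$. Since $B^{\#}$ does not vanish on the closed unit disc, $h\in H^{\infty}\subset \H2$; using $|A^{\#}|=|A|$ and $|B^{\#}|=|B|$ on the torus one finds $|h|^2=F$ there, and in particular $\|h\|^2_{\H2}=c_0$.

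I would next verify that $h$ lies in $H(\varphi)$, where $\varphi = z^N \psi$ and $N=\max(0,m-d+1)$. Using the orthogonal decomposition $H(z^N \psi) = H(z^N) \oplus z^N H(\psi)$, combined with the standard description $H(\psi) = \{R/B^{\#} : R\in\C[z],\ \deg R \leq d-1\}$, one identifies
\begin{equation*}
H(\varphi) = \{S/B^{\#} : S\in\C[z],\ \deg S\leq N+d-1\}.
\end{equation*}
The numerator of $h$ has degree $m$, and the inequality $m\leq N+d-1$ holds both when $m<d$ (where $N=0$) and when $m\geq d$ (where $N=m-d+1$), so $h\in H(\varphi)$.

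Finally, writing $h(z) = \sum_{j\geq 0}a_j z^j$, one has $\bbs^k h(z) = \sum_{j\geq 0}a_{j+k}z^j$, whence
\begin{equation*}
\langle \bbs^k h, h\rangle_{\H2} = \sum_{j\geq 0} a_{j+k}\,\overline{a_j},
\end{equation*}
which is exactly the $k$-th Fourier coefficient of $|h|^2 = F$, namely $c_k$. Since $H(\varphi)$ is $\bbs$-invariant, this equals $\langle \bbs(\varphi)^k h, h\rangle$. Setting $x := h/\|h\|$ produces a unit vector in $H(\varphi)$ with $c_k/c_0 = \langle \bbs(\varphi)^k x, x\rangle$, and the inequality $|c_k| \leq c_0\,\omega_{2}(\bbs^k(\varphi))$ follows from the definition of the numerical radius. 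The principal difficulty is the degree bookkeeping that forces the exponent $\max(0,m-d+1)$: when $m<d$ the space $H(\psi)$ already has room for the numerator of $h$ and one recovers Badea--Cassier, but when $m\geq d$ the Fej\'er--Riesz numerator of $h$ has degree exceeding $\dim H(\psi)-1 = d-1$, and the minimal enlargement of $\psi$ that accommodates $h$ is exactly the monomial $z^{m-d+1}$.
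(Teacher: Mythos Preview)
Your proof is correct and follows the same overall strategy as the paper: exploit the reflection symmetry $F(z)=\overline{F(1/\bar z)}$ to factor $P$ and $Q$, produce a function $h$ with $|h|^2=F$ on the torus and $h\in H(\varphi)$, and then read off $c_k$ as the matrix coefficient $\langle \bbs(\varphi)^k h,h\rangle$.

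The one genuine difference lies in how membership $h\in H(\varphi)$ is established. The paper computes $\varphi F$ on the torus, verifies that it lands in $H^1_0$, and then invokes Lemma~2.2 (quoted from Badea--Cassier) as a black box to obtain $F=|f|^2$ with $f\in H(\varphi)$. You instead write down the Fej\'er--Riesz factor $h=\sqrt{\kappa_P/\kappa_Q}\,A/B^{\#}$ explicitly, give the concrete description $H(\varphi)=\{S/B^{\#}:\deg S\le N+d-1\}$, and check the degree inequality $m\le N+d-1$ directly in both cases $m<d$ and $m\ge d$. Your route is more self-contained and makes the appearance of the exponent $\max(0,m-d+1)$ completely transparent; the paper's route is shorter once the lemma is granted.

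Two minor remarks. First, only the ratio $\kappa_P/\kappa_Q$ is forced to be positive by the positivity of $F$, not the individual constants; since $h$ depends only on this ratio, nothing is affected. Second, the paper begins by reducing to strict positivity via a continuity argument; you should include that sentence if ``positive'' is intended to allow zeros of $F$ on the torus, since otherwise $P$ could have unimodular roots and your factorisation $P=\kappa_P z^{s_P}AA^{\#}$ would need adjustment.
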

\begin{lemma}[\cite{Cassier} lemma 3.2] 
\textit{Let $u$ be a inner function and let $f$ be a positive function in the subspace $\overline{u}~{\h}_{0}^{1}$ of $L^{1}(\T)$. Then there exists a function $h$ in $H(u)=\H2\ominus u~\H2$ such that $f=\lvert h\lvert^{2}$.}
\end{lemma}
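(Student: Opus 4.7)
The plan is to take $h$ to be the outer function in $\H2$ with $|h|^{2} = f$ a.e.\ on $\T$ and to check that it already belongs to $H(u)$. The central reformulation is that $h \perp u\,\H2$ in $L^{2}(\T)$ is equivalent, by inspecting the nonnegative Fourier coefficients of $h\,\overline{u}$, to the one-sided condition $u\,\overline{h} \in {\h}^{2}_{0}$. This turns membership in $H(u)$ into a statement accessible through inner--outer factorisation.

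First I construct $h$. Since $f \in \overline{u}\,{\h}^{1}_{0}$, the function $uf$ lies in ${\h}^{1}_{0}$, and assuming $f \not\equiv 0$, standard ${\h}^{1}$ theory gives $\log|uf| = \log f \in L^{1}(\T)$. The Szeg\H{o} exponential then provides an outer function $h \in \H2$ with $|h|^{2} = f$ a.e.\ on $\T$.

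Next I perform the inner--outer factorisation $uf = B \cdot O$ in ${\h}^{1}$, with $B$ inner and $O$ outer. Both $O$ and $h^{2}$ are outer with modulus $f$, hence coincide up to a unimodular constant which I absorb into $B$. This gives $uf = B\,h^{2}$, which combined with the boundary identity $f = h\,\overline{h}$ yields
\begin{equation*}
u\,\overline{h} \;=\; B\,h \qquad \text{a.e.\ on } \T.
\end{equation*}
The right-hand side lies in $\H2$. Since $uf \in {\h}^{1}_{0}$, evaluation at the origin gives $B(0)\,h(0)^{2} = 0$, which forces $(Bh)(0) = 0$ whether it is $B(0)$ or $h(0)$ that vanishes. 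Hence $Bh \in {\h}^{2}_{0}$, so $u\,\overline{h} \in {\h}^{2}_{0}$, and the reformulation from the first paragraph yields $h \in H(u)$.

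The main obstacle is the last bookkeeping step: from $(uf)(0) = 0$ one only reads off $B(0)\,h(0)^{2} = 0$, and one must notice that this still forces $(Bh)(0) = 0$, rather than the stronger and generally false claim that both $B(0)$ and $h(0)$ vanish. Apart from this subtlety, the proof is a clean combination of the Szeg\H{o}--Riesz factorisation theory and the one-sided Fourier description of $H(u)$ as the $L^{2}$-orthogonal complement of $u\,\H2$.
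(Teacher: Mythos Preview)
The paper does not actually prove this lemma; it is quoted verbatim from Badea--Cassier \cite{Cassier} (their Lemma~3.2), and the proof environment that follows it in the paper is the proof of Theorem~2.1, not of the lemma. So there is no in-paper argument to compare against.

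Your argument is correct and is, in fact, the standard route to this statement. The two ingredients are exactly the ones you identify: (i) Szeg\H{o}'s theorem produces an outer $h\in\H2$ with $\lvert h\rvert^{2}=f$ once one knows $\log f\in L^{1}(\T)$, which is forced by $uf\in{\h}^{1}_{0}\setminus\{0\}$; and (ii) the characterisation $H(u)=\{h\in\H2:\ u\,\overline{h}\in{\h}^{2}_{0}\}$ converts the orthogonality condition into something the inner--outer factorisation $uf=Bh^{2}$ can verify directly via the identity $u\,\overline{h}=Bh$ on $\T$. Your handling of the final zero-at-the-origin bookkeeping, namely that $B(0)h(0)^{2}=0$ forces $(Bh)(0)=0$ regardless of which factor vanishes, is the only point where one could stumble, and you have it right. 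The trivial case $f\equiv 0$ is covered by $h=0$.
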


\begin{proof}
First, note that by continuity we may assume that $F$ is strictly positive on the torus. Let $F=P/Q$ and assume that $F(z)>0$ for every $z\in \T$. Now, let  $$G\left(z\right)=\overline{F\left( \frac{1}{\overline{z}}\right) } .$$ We see that $G$ is analytic, except a finite set of complex numbers. Since $F$ is real on the torus, we have $G(e^{it})=\overline{F(e^{it})}=F(e^{it})$ for every $t\in\R$ and the analytic extension principle implies that 
\begin{eqnarray*}
  F\left(z\right)=\frac{P\left(z\right)}{Q\left(z\right)}=G(z)=\frac{\overline{P\left(\frac{1}{\overline{z}}\right)}}{\overline{Q\left(\frac{1}{\overline{z}}\right)}},
\end{eqnarray*}
thus 
\begin{equation}
P\left(z\right)\overline{Q\left(\frac{1}{\overline{z}}\right)}=\overline{P\left(\frac{1}{\overline{z}}\right)}Q\left(z\right)
\end{equation}
except for a finite set in $\C$. (2.1) implies that if  $P\left(\alpha\right)=0$, with $\alpha\neq0$ then necessarily $P\left(\frac{1}{\overline{\alpha}}\right)=0$. Then $P$ can be written as
$$P\left(z\right)=c_{1}~z^{m_{0}}~\left(z-\alpha_{1}\right)^{m_{1}}...\left(z-\alpha_{p}\right)^{m_{p}}~\left(1-\overline{\alpha_{1}}z\right)^{m_{1}}...\left(1-\overline{\alpha_{p}}z\right)^{m_{p}}$$
with a constant $c_{1}$. With the same argument as before, $Q$ can be written as
$$Q\left(z\right)=c_{2}~z^{d_{0}}~\left(z-\beta_{1}\right)^{d_{1}}...\left(z-\beta_{q}\right)^{d_{q}}~\left(1-\overline{\beta_{1}}z\right)^{d_{1}}...\left(1-\overline{\beta_{q}}z\right)^{d_{q}}$$ with a constant $c_{2}$. Since $P$ and $Q$ are coprime, we must have $m_{0}=0$ or $d_{0}=0$.

Then $$F\left(e^{it}\right)=\vert F\left(e^{it}\right)\vert=c~\vert \dfrac{P_{1}\left(e^{it}\right)}{Q_{1}\left(e^{it}\right)}\vert^{2}$$
where $P_{1}\left(z\right)=\prod_{j=1}^{p}\left(z-\alpha_{j}\right)^{m_{j}}$ and $Q_{1}\left(z\right)=\prod_{j=1}^{q}\left(z-\beta_{j}\right)^{d_{j}}$ therefore we have
$$F\left(e^{it}\right)=c~\dfrac{\prod_{j=1}^{p}\left(e^{it}-\alpha_{j}\right)^{m_{j}}\left(e^{-it}-\overline{\alpha_{j}}\right)^{m_{j}}}{\prod_{j=1}^{q}\left(e^{it}-\beta_{j}\right)^{d_{j}}\left(e^{-it}-\overline{\beta_{j}}\right)^{d_{j}}}$$ with a constant $c$.
Let $m=m_{1}+\cdots+m_{p}$, $d=d_{1}+\cdots+d_{q}$ and $\varphi(z)=z^{r}\psi(z)$ where $r=\max(0,m-d+1)$. 
Now,
\begin{eqnarray*}
\varphi\left(e^{it}\right)F\left(e^{it}\right)&=&c~e^{irt}\psi\left(e^{it}\right)\dfrac{\prod_{j=1}^{p}\left(e^{it}-\alpha_{j}\right)^{m_{j}}\left(e^{-it}-\overline{\alpha_{j}}\right)^{m_{j}}}{\prod_{j=1}^{q}\left(e^{it}-\beta_{j}\right)^{d_{j}}\left(e^{-it}-\overline{\beta_{j}}\right)^{d_{j}}}
\end{eqnarray*}

\begin{eqnarray*}
&=&c~e^{irt}\prod_{j=1}^{q}\left( \dfrac{e^{it}-\beta_{j}}{1-\overline{\beta_{j}}e^{it}}\right)^{d_{j}}\dfrac{\prod_{j=1}^{p}\left(e^{it}-\alpha_{j}\right)^{m_{j}}\left(1-\overline{\alpha_{j}}e^{it}\right)^{m_{j}}e^{-imt}}{\prod_{j=1}^{q}\left(e^{it}-\beta_{j}\right)^{d_{j}}\left(1-\overline{\beta_{j}}e^{it}\right)^{d_{j}}e^{-idt}}\\
&=&c~e^{i\left(d-m\right)t}e^{irt}~\dfrac{\prod_{j=1}^{p}\left(e^{it}-\alpha_{j}\right)^{m_{j}}\left(1-\overline{\alpha_{j}}e^{it}\right)^{m_{j}}}{\prod_{j=1}^{q}\left(1-\overline{\beta_{j}}e^{it}\right)^{2d_{j}}}\\
&=&c~e^{it\max(d-m,1)}~\dfrac{\prod_{j=1}^{p}\left(e^{it}-\alpha_{j}\right)^{m_{j}}\left(1-\overline{\alpha_{j}}e^{it}\right)^{m_{j}}}{\prod_{j=1}^{q}\left(1-\overline{\beta_{j}}e^{it}\right)^{2d_{j}}}\\
\end{eqnarray*}
which implies that  $\varphi F\in \hbox{{\rm H}\kern-0.9em\hbox{{\rm I}\ \ }}^1_{0}$. It follows from Lemma 2.2 that we have $F=\lvert f\lvert^{2}$ with a suitable $f\in H(\varphi)$. Then for any integer $k$, we get
\begin{eqnarray*}
\vert c_{k}\vert=\vert < F,e^{ikt}>\vert&=&\vert< f\overline{f},e^{ikt}>\vert\\
&=&\vert< fe^{-ikt},f>\vert\\
&=&\vert< {(\bbs(\varphi))}^{k}f,f>\vert\\
&=&\vert< {\bbs}^{k}(\varphi)f,f>\vert.
\end{eqnarray*}
Therefore$$\vert c_{k}\vert\leq \Vert f\Vert_{2}^{2}~\omega_{2}({\bbs}^{k}(\varphi))=\Vert F\Vert_{1}~\omega_{2}({\bbs}^{k}(\varphi))=c_{0}~\omega_{2}({\bbs}^{k}(\varphi)).$$
The proof is now complete.
\end{proof}

\begin{corollary}[Egerváry and Szász \cite{Eger}]
\textit{ Let $F(e^{it})=\sum_{j=-n+1}^{n-1}c_{j}e^{ijt}$ be a positive trigonometric polynomial $(n\geq2)$. Then $$\lvert c_{k}\lvert\leqslant c_{0}\cos\left( \dfrac{\pi}{\left[ \frac{n-1}{k}\right] +2 }\right)~~~for~~~1\leqslant k\leqslant n-1.$$}
\end{corollary}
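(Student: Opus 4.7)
The plan is to apply Theorem 2.1 to the natural rational representation of $F$. Setting $Q(z) = z^{n-1}$ and $P(z) = z^{n-1}F(z) = \sum_{j=0}^{2n-2} c_{j-n+1} z^j$, we have $F = P/Q$ with $\deg P \leqslant 2(n-1)$; after cancelling any shared power of $z$ (which simply lowers $n$) we may assume $P$ and $Q$ are coprime. Since $Q$ has no nonzero roots, the Blaschke product $\psi$ in Theorem 2.1 is empty and $d = 0$. For $P$, the reciprocity identity from the proof of Theorem 2.1 together with $F \geqslant 0$ on the torus force the nonzero roots of $P$ either to pair off as $(\alpha, 1/\bar\alpha)$ with $|\alpha| \neq 1$, or to lie on the torus with even multiplicity. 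A degree count then bounds the total multiplicity $m$ of roots in the open disc by $m \leqslant n - 1$, so $\varphi(z) = z^{m+1}\psi(z) = z^{m+1}$ with $m + 1 \leqslant n$.

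At this point the estimate reduces to the Haagerup--de la Harpe inequality. The operator $\bbs(\varphi)$ acts on $H(z^{m+1})$, the space of polynomials of degree at most $m$, and is unitarily equivalent to $\bbS_{m+1}^{\ast}$; hence $\bbs^k(\varphi)$ is nilpotent of index $N = \lceil (m+1)/k \rceil \leqslant \lceil n/k \rceil$ with norm at most $1$, and Theorem 1.1 yields
$$\omega_2(\bbs^k(\varphi)) \leqslant \cos\frac{\pi}{N+1} \leqslant \cos\frac{\pi}{\lceil n/k \rceil + 1}.$$
A short case analysis on $n = qk + r$ with $0 \leqslant r < k$ confirms the identity $\lceil n/k \rceil + 1 = [(n-1)/k] + 2$, and combining this chain of inequalities with Theorem 2.1 produces the claimed bound on $|c_k|$.

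The main obstacle will be the bookkeeping that yields $m + 1 \leqslant n$: the a priori degree bound on $P$ is only $2(n-1)$, and it is the Hermitian symmetry $\overline{F(1/\bar z)} = F(z)$ that pairs roots inside and outside the disc so that at most $n - 1$ lie in the open disc. This pairing is precisely the point where the structure of positive trigonometric polynomials enters, and it is what pulls the nilpotency index of $\bbs^k(\varphi)$ down to the sharp value $\lceil n/k \rceil$ rather than $\lceil (2n-1)/k \rceil$, giving the Egerv\'ary--Sz\'asz constant.
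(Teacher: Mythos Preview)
Your argument is correct and follows essentially the same route as the paper: write $F=P/Q$ with $Q(z)=z^{n-1}$, apply Theorem~2.1 to obtain $\varphi(z)=z^{m+1}$ with $m+1\leqslant n$, and then bound $\omega_2\big((\bbs(\varphi))^{k}\big)$.

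The one place where you diverge is the final step. The paper observes that $({\bbS}_{n}^{\ast})^{k}$ is unitarily equivalent to an orthogonal sum of smaller shifts, the largest block having dimension $[\tfrac{n-1}{k}]+1$, and reads off the \emph{exact} value $\omega_2(({\bbS}_{n}^{\ast})^{k})=\cos\big(\pi/([\tfrac{n-1}{k}]+2)\big)$ from~(1.1). You instead note that $({\bbS}_{m+1}^{\ast})^{k}$ is nilpotent of index $N=\lceil (m+1)/k\rceil$ and invoke Theorem~1.1 to get the upper bound $\cos(\pi/(N+1))$, then use the elementary identity $\lceil n/k\rceil=\lfloor (n-1)/k\rfloor+1$. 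Both arrive at the same constant; your route avoids the block decomposition at the cost of giving only an inequality rather than an equality for the numerical radius, which is all that is needed here. Your more careful bookkeeping on $m\leqslant n-1$ (allowing for possible torus zeros of $P$) is not strictly necessary, since the paper's continuity reduction to strictly positive $F$ already forces $P$ to have no roots on the torus, but it does no harm.
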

\begin{proof}
We have :
\begin{eqnarray*}
F(e^{it})&=&c_{-n+1}e^{i(-n+1)t}+\dots+c_{0}+\dots+c_{n-1}e^{i(n-1)t}\\
&=& e^{(-n+1)it}\left( c_{-n+1}+\dots+c_{0}e^{i(n-1)t}+\dots+c_{n-1}e^{2i(n-1)t} \right) \\
&=&\dfrac{P(e^{it})}{Q(e^{it})}
\end{eqnarray*}
where $P(e^{it})=c_{-n+1}+\dots+c_{0}e^{i(n-1)t}+\dots+c_{n-1}e^{2i(n-1)t}$ and $Q(e^{it})=e^{i(n-1)t}$. In this case we have $m=n-1$, $d=0$ and $\varphi(z)=z^{n}$. Then Theorem 2.1 implies that 

$$\vert c_{k}\vert\leq c_{0}~\omega_{2}( {{\bbS}^{\ast}_{n}}^{k}).$$

But generally ${{\bbS}^{\ast}_{n}}^{k}$ is unitarily equivalent to an orthogonal sum of shifts of smaller dimension, the largest dimension being $s(k,n)+1$ where $s(k,n)=[\frac{n-1}{k}]$. Therefore $\omega_{2}({{\bbS}^{\ast}_{n}}^{k})=\omega_{2}({\bbS}^{\ast}_{s(k,n)+1})=\cos\dfrac{\pi}{s(k,n)+2}$. The same computation follows from \cite{Gustafson}. Finally, this implies that 
\begin{eqnarray*}
 \lvert c_{k} \lvert &\leqslant& c_{0}~\cos\dfrac{\pi}{s(k,n)+2}\\
&=&c_{0}\cos\left( \dfrac{\pi}{\left[ \frac{n-1}{k}\right] +2 }\right).
\end{eqnarray*}
\end{proof}

\begin{remark}
The bound for $c_{1}$ is due to Fejer (1915).
\end{remark}
\section{The numerical radius of the shift compression}
\subsection{Preliminaries}
The spectral decomposition of the KMS matrix 
$$K_{n}(\alpha)=
\left(
\begin{array}{cccc}
1&\alpha&\cdots&\alpha^{n-1}\\
\alpha&\ddots&\ddots &  \vdots                \\
\vdots&  \ddots               &  \ddots                      &\alpha          \\
\alpha^{n-1}&  \cdots               &   \alpha                     &        1
\end{array}
\right)=(\alpha^{\lvert r-s\lvert})^{n}_{r,s=1}
~~~~; 0\leqslant\alpha<1$$

 is very well understood in the computational sense. For this reason, these matrices are often used as test matrices. It's shown in \cite{Szego} page 69--72 that $K_{n}(\alpha)$ is a Toeplitz matrix associated with the Poisson kernel $P_{\alpha}(e^{it})=\dfrac{1-\alpha^{2}}{{\vert 1-\alpha e^{it}\vert}^{2}}$ and its eigenvalues are:$$ \lambda_{k}^{(n)}=P_{\alpha}(e^{it_{k}^{(n)}})~~,1\leq k \leq n$$
where $t_{k}^{(n)}$ are the solutions of 
\begin{equation}
 p_{n}(\cos t)=\dfrac{\sin(n+1)t-2\alpha\sin nt+\alpha^{2}\sin(n-1)t}{\sin t}=0.
\end{equation}
The expression $p_{n}(\cos t)$ is a polynomial of degree $n$ in $\cos t$ and it has $n$ real distinct zeros $\cos t_{k}^{(n)}$ for $1\leqslant k\leqslant n$ where :
$$0< t_{1}^{(n)} <  t_{2}^{(n)} < t_{3}^{(n)} < \cdots  < t_{n}^{(n)}< \pi~.$$
This implies that $$\dfrac{1+\alpha}{1-\alpha} >\lambda_{1}^{(n)} > \lambda_{2}^{(n)} > \lambda_{3}^{(n)} > \cdots > \lambda_{n}^{(n)} > \dfrac{1-\alpha}{1+\alpha} ~. $$
The evaluation of the zeros $t_{k}^{(n)}$ in explicit terms seems to be out of end. However, it is easy to show that they are separated by the quantities $x_{k}=\dfrac{k\pi}{n+1},~1\leq k \leq n$. Indeed, for $1\leqslant k\leqslant n$  $$p_{n}(\cos x_{k})=(-1)^{k}2\alpha(1-\alpha\cos x_{k})$$ and  $$ sgn~p_{n}(\cos x_{k})=(-1)^{k}~.$$
Also we see by direct substitution that the latter equation holds for $k=0$, so that $$0< t_{1}^{(n)} \leqslant x_{1}<  t_{2}^{(n)}\leqslant x_{2}  < \cdots  < t_{n}^{(n)}\leqslant x_{n}< \pi ~.$$

\begin{remark}
In the case where $\alpha=0$ in (3.1) we have $t_{k}^{(n)}=x_{k}$. 
\end{remark}
\ In the next proposition we give a better lower and upper bound for $t_{1}^{n}$.

\begin{proposition} 
\textit{For each integer $n\geq2$;
\begin{equation}
\frac{2}{n+1}\arccos(\alpha)\leqslant t_{1}^{(n)}\leqslant \arccos(\alpha)~.
\end{equation}}
\end{proposition}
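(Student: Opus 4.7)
My plan is to transform the implicit equation defining $t_1^{(n)}$ into a single tractable trigonometric equation, and then prove both bounds from elementary monotonicity.

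\textbf{Step 1 (reduction of (3.1)).} I would first observe the factorization
$$\sin(n+1)t - 2\alpha\sin nt + \alpha^2\sin(n-1)t = \operatorname{Im}\bigl[e^{i(n-1)t}(e^{it}-\alpha)^2\bigr],$$
so that writing $e^{it}-\alpha = \rho(t)\,e^{i\theta(t)}$ with $\theta$ continuous and $\theta(0)=0$ turns equation (3.1) into $(n-1)t + 2\theta(t)\in\pi\mathbb{Z}$. The auxiliary identity $e^{it}-\alpha = e^{it/2}\bigl((1-\alpha)\cos(t/2) + i(1+\alpha)\sin(t/2)\bigr)$ then produces the closed form $\theta(t) = t/2 + \arctan\!\bigl(\tfrac{1+\alpha}{1-\alpha}\tan(t/2)\bigr)$; since $t_1^{(n)}$ corresponds to the first branch (taking the constant equal to $\pi$), the defining relation collapses to
$$\tan\!\frac{t}{2}\,\tan\!\frac{nt}{2} = \frac{1-\alpha}{1+\alpha}.$$

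\textbf{Step 2 (upper bound).} Set $\phi = \arccos(\alpha)$, so that $\tan^2(\phi/2) = (1-\alpha)/(1+\alpha)$, and put $h(t) = \tan(t/2)\tan(nt/2)$. On $(0,\pi/n)$ the function $h$ is strictly increasing from $0$ to $+\infty$, and $t_1^{(n)}$ is the unique solution of $h(t) = \tan^2(\phi/2)$ in this range. If $\phi \geq \pi/n$ the inequality $t_1^{(n)} < \pi/n \leq \phi$ is immediate; otherwise $n\phi/2 \in (\phi/2,\pi/2)$, so $\tan(n\phi/2) \geq \tan(\phi/2)$ and $h(\phi) \geq \tan^2(\phi/2) = h(t_1^{(n)})$. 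Monotonicity of $h$ then yields $\phi \geq t_1^{(n)}$.

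\textbf{Step 3 (lower bound).} I would use the elementary identity
$$\tan(c-x)\tan(c+x) = \frac{\cos 2x - \cos 2c}{\cos 2x + \cos 2c},$$
from which $\tan(c-x)\tan(c+x) \leq \tan^2(c)$ follows whenever $\cos 2c \geq 0$ and $\cos 2x + \cos 2c > 0$. Applied with $c = \phi/2$ and $x = \frac{(n-1)\phi}{2(n+1)}$ (so $c-x = \phi/(n+1)$ and $c+x = n\phi/(n+1)$, and both hypotheses are verified because $\phi \in (0,\pi/2]$), this gives
$$h\!\left(\frac{2\phi}{n+1}\right) = \tan\!\frac{\phi}{n+1}\,\tan\!\frac{n\phi}{n+1} \leq \tan^2\!\frac{\phi}{2} = h(t_1^{(n)}),$$
and monotonicity of $h$ then yields $\frac{2}{n+1}\arccos(\alpha) \leq t_1^{(n)}$.

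The main obstacle is Step~1: recognizing the hidden factorization in the numerator of $p_n(\cos t)$ and reducing the implicit transcendental condition to the single equation $\tan(t/2)\tan(nt/2) = (1-\alpha)/(1+\alpha)$. Once that reduction is in hand, both bounds follow from one-variable calculus together with the concavity-type identity in Step~3. As a sanity check, the degenerate case $\alpha = 0$ gives $\phi = \pi/2$ and both bounds collapse to $\pi/(n+1) = t_1^{(n)}$, consistent with Remark~3.1.
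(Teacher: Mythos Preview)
Your proof is correct, but it takes a different route from the paper's.  Interestingly, your reduced equation $\tan(t/2)\tan(nt/2)=(1-\alpha)/(1+\alpha)$ is algebraically equivalent to the paper's intermediate equation $\cos\frac{(n+1)t}{2}=\alpha\cos\frac{(n-1)t}{2}$, obtained there from the direct real factorization
\[
p_n(\cos t)=\frac{2}{\sin t}\Bigl(\sin\tfrac{(n+1)t}{2}-\alpha\sin\tfrac{(n-1)t}{2}\Bigr)\Bigl(\cos\tfrac{(n+1)t}{2}-\alpha\cos\tfrac{(n-1)t}{2}\Bigr),
\]
together with the observation that on $(0,\pi/(n+1)]$ the sine factor is strictly positive.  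From that point the paper dispatches both bounds in one line each: for the lower bound, assuming $t_1^{(n)}<\frac{2}{n+1}\arccos\alpha$ gives $\cos\frac{(n+1)t_1^{(n)}}{2}>\alpha\ge\alpha\cos\frac{(n-1)t_1^{(n)}}{2}$, a contradiction; for the upper bound, writing $\frac{(n+1)t}{2}=t+\frac{(n-1)t}{2}$ and expanding yields $(\cos t-\alpha)\cos\frac{(n-1)t}{2}=\sin t\,\sin\frac{(n-1)t}{2}$, forcing $\cos t>\alpha$.  Your approach trades this brevity for a more uniform scheme: both inequalities become comparisons of $h(t)=\tan(t/2)\tan(nt/2)$ at the two endpoints against the known value $h(t_1^{(n)})=\tan^2(\phi/2)$, with the lower bound handled by the neat identity $\tan(c-x)\tan(c+x)=\dfrac{\cos 2x-\cos 2c}{\cos 2x+\cos 2c}$.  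The complex-exponential derivation in your Step~1 also makes transparent why there are exactly $n$ roots in $(0,\pi)$ (the phase $\Psi(t)=nt+2\arctan\bigl(\tfrac{1+\alpha}{1-\alpha}\tan(t/2)\bigr)$ increases from $0$ to $(n+1)\pi$), whereas the paper relies on the separate sign-change argument at $x_k=k\pi/(n+1)$ already recorded before the proposition.
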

\begin{proof}
 First, we note that 
\begin{equation}
p_{n}(t)=\tfrac{2}{\sin t}\left( \sin\tfrac{(n+1)t}{2}-\alpha \sin\tfrac{(n-1)t}{2}\right) \left( \cos\tfrac{(n+1)t}{2}-\alpha \cos\tfrac{(n-1)t}{2}\right). 
\end{equation}
Since for all  $0 < t \leqslant \frac{\pi}{n+1}$, we have 
\begin{equation}
0< \tfrac{(n-1)t}{2} < \tfrac{(n+1)t}{2}\leqslant\tfrac{\pi}{2}
\end{equation}
this implies that $$\alpha  \sin\tfrac{(n-1)t}{2}< \sin\tfrac{(n-1)t}{2}<\sin\tfrac{(n+1)t}{2}$$ then $t_{1}^{(n)}$ is zero of  
\begin{equation}
\cos\tfrac{(n+1)t}{2}=\alpha \cos\tfrac{(n-1)t}{2}.
\end{equation}
Now if we suppose that  $ t_{1}^{(n)}<\tfrac{2}{n+1}\arccos(\alpha)$ then $\tfrac{(n+1)t_{1}^{(n)}}{2}< \arccos (\alpha)$ and $
\cos\tfrac{(n+1)t_{1}^{(n)}}{2}> \alpha \geq \alpha \cos\tfrac{(n-1)t_{1}^{(n)}}{2}$ which contradicts(3.5).\\ Hence $t_{1}^{(n)}\geq \tfrac{2}{n+1}\arccos(\alpha)$ holds.

\ From (3.5), we have $$\cos t ~ \cos\tfrac{(n-1)t}{2} - \sin t~ \sin\tfrac{(n-1)t}{2}=\alpha \cos\tfrac{(n-1)t}{2}$$
which implies that $$\left( \cos t -\alpha\right)~\cos\tfrac{(n-1)t}{2}=\sin t~\sin\tfrac{(n-1)t}{2}$$
while from (3.4), ~ $\sin\tfrac{(n-1)t}{2}$~ and ~$\cos\tfrac{(n-1)t}{2}$ are both positive, therefore $ \cos t -\alpha $ is positive, which completes the proof.
\end{proof}
\begin{remark}
For a fixed $0\leqslant\alpha<1$, since $P_{\alpha}(e^{it})$ is positive and decreasing on the interval $[0,\pi]$, then it is easy to obtain the sharp lower and upper bound of the largest eigenvalues of $K_{n}(\alpha)$: $$1\leqslant\lambda_{1}^{(n)} \leqslant\frac{1-\alpha^{2}}{1-2\alpha \cos\left( \frac{2}{n+1}\arccos(\alpha)\right) +\alpha^{2}}.$$ 
Note that $\lambda_{1}^{(n)}$ is also the numerical radius of $K_{n}(\alpha)$. This is due to the fact that the norm and numerical radius of a symmetric matrix coincides with its largest eigenvalue.
\end{remark}

\ For $0\leqslant \alpha<1$, we denote by
 $$s_{n}(\alpha)=\max\left\lbrace\frac{\alpha(\cos t_{1}^{(n)}-\alpha)t}{1-2\alpha \cos t_{1}^{(n)}+\alpha^{2}};\frac{-\alpha(\cos t_{n}^{(n)}-\alpha)}{1-2\alpha \cos t_{n}^{(n)}+\alpha^{2}}\right\rbrace ;$$

$$m_{n}(\alpha)=\max\left\lbrace\frac{\lvert(1+\alpha^{2})\cos t_{1}^{(n)}-2\alpha\lvert}{1-2\alpha \cos t_{1}^{(n)}+\alpha^{2}},\frac{-(1+\alpha^{2})\cos t_{n}^{(n)}+2\alpha}{1-2\alpha \cos t_{n}^{(n)}+\alpha^{2}}\right\rbrace$$
and
$$M_{n}(\alpha)=\max\left\lbrace\frac{(1-3\alpha^{2})\cos t_{1}^{(n)}+2\alpha^{3}}{1-2\alpha \cos t_{1}^{(n)}+\alpha^{2}},\frac{-(1+\alpha^{2})\cos t_{n}^{(n)}+2\alpha}{1-2\alpha \cos t_{n}^{(n)}+\alpha^{2}}\right\rbrace .$$

\begin{proposition}
\textit{ For each integer $n\geq2$ and $0\leqslant\alpha<1$, let
$$ J_{n}(\alpha)=\left(
\begin{array}{cccc}
0        &\alpha    &\cdots  &\alpha^{n-1}\\
 \vdots        & \ddots   &\ddots  &\vdots        \\
 \vdots        &          & \ddots &\alpha       \\
  0       &   \dots       &    \dots    & 0
\end{array}
\right);  $$then we have
 $$\omega_{2}( J_{n}(\alpha))=\omega_{2}(\bbg( J_{n}(\alpha)))=s_{n}(\alpha).$$}
\end{proposition}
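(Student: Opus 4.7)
My plan is to exploit the identity $\bbg(J_n(\alpha))=\tfrac{1}{2}(K_n(\alpha)-I)$ together with a Perron--Frobenius majorization that reduces the complex-vector problem to a real nonnegative one.

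I would first observe that $J_n(\alpha)+J_n(\alpha)^{*}+I=K_n(\alpha)$, whence $\bbg(J_n(\alpha))=\tfrac{1}{2}(K_n(\alpha)-I)$ is self-adjoint with eigenvalues $(\lambda_k^{(n)}-1)/2$. Its numerical radius therefore equals its operator norm, and using $\lambda_k^{(n)}=P_{\alpha}(e^{it_k^{(n)}})$ with the ordering $\lambda_1^{(n)}>1>\lambda_n^{(n)}$ recalled in Section~3.1, a direct simplification produces $\omega_{2}(\bbg(J_n(\alpha)))=s_n(\alpha)$, settling the first equality.

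For the lower bound $\omega_2(J_n(\alpha))\geq s_n(\alpha)$ I would plug a real eigenvector $v^{(k)}$ of $K_n(\alpha)$ (real because $K_n(\alpha)$ is real symmetric) into the quadratic form. The skew part $\tfrac{1}{2i}(J_n(\alpha)-J_n(\alpha)^{T})$ yields an antisymmetric real quadratic form that vanishes on real vectors, so $\langle J_n(\alpha)v^{(k)},v^{(k)}\rangle=\tfrac{\lambda_k^{(n)}-1}{2}\|v^{(k)}\|^{2}$; taking $k\in\{1,n\}$ delivers the lower bound.

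The heart of the argument is the matching upper bound. For any unit $x\in\C^{n}$, I would use $\alpha\geq 0$ to apply the triangle inequality entrywise:
\[
|\langle J_n(\alpha)x,x\rangle|=\Bigl|\sum_{r<s}\alpha^{s-r}x_{s}\overline{x_{r}}\Bigr|\leq \sum_{r<s}\alpha^{s-r}|x_{r}||x_{s}|=\tilde{x}^{T}J_n(\alpha)\tilde{x}=\tilde{x}^{T}\bbg(J_n(\alpha))\tilde{x},
\]
where $\tilde{x}=(|x_{j}|)_{j=1}^{n}$ is a nonnegative unit vector and the antisymmetric part again drops out. Since $\bbg(J_n(\alpha))$ is symmetric with nonnegative entries, Perron--Frobenius gives $\max\{v^{T}\bbg(J_n(\alpha))v:v\geq 0,\,\|v\|=1\}=\lambda_{\max}(\bbg(J_n(\alpha)))=\tfrac{\lambda_1^{(n)}-1}{2}$, whence $\omega_{2}(J_n(\alpha))\leq \tfrac{\lambda_1^{(n)}-1}{2}\leq s_n(\alpha)$.

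The only step with any subtlety is the entrywise triangle inequality: it uses $\alpha\geq 0$ in an essential way, which is precisely why the proposition restricts to this range (the general case then follows from the invariance of $\omega_2$ under $\alpha\mapsto\alpha e^{i\theta}$ noted in the introduction, implemented by a diagonal unitary similarity). A pleasant byproduct of the sandwich is the inequality $\lambda_1^{(n)}+\lambda_n^{(n)}\geq 2$, so the maximum defining $s_n(\alpha)$ is in fact always attained by its first argument.
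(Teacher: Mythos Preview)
Your argument is correct and rests on the same two ingredients as the paper's proof: the identity $2\,\bbg(J_n(\alpha))=K_n(\alpha)-I$ (giving eigenvalues $(\lambda_k^{(n)}-1)/2$) and the entrywise triangle inequality reducing to nonnegative vectors---the paper packages this as a single chain of equalities establishing $\omega_2(J_n(\alpha))=\omega_2(\bbg(J_n(\alpha)))$ directly, while you argue by sandwich, but the content is identical. Your byproduct $\lambda_1^{(n)}+\lambda_n^{(n)}\geq 2$ (equivalently, the maximum defining $s_n(\alpha)$ is always its first term) is a genuine bonus that the paper does not extract; it would in fact render the hypothesis in Corollary~3.5 unnecessary.
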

\begin{proof}
First observe that
\begin{eqnarray*}
\omega_{2}(\bbg( J_{n}(\alpha)))&=&\sup_{u=(u_{0},\cdots,u_{n-1})\in \C^{n},\lVert u\lVert=1}\lvert<\bbg( J_{n}(\alpha))u,u>\lvert\\
&=&\frac{1}{2}\sup_{\sum_{l=0}^{n-1}\vert{ u_{l}\vert}^{2}=1}\Big\vert\sum_{1\leq m \neq l \leq n-1}\alpha^{\vert l-m\vert} u_{l}\overline{u_{m}}\Big\vert\\
&=&\frac{1}{2}\sup_{\sum_{l=0}^{n-1}\vert{ u_{l}\vert}^{2}=1}\sum_{1\leq m \neq l \leq n-1}\alpha^{\vert l-m\vert} \vert u_{l}\vert \vert u_{m}\vert\\
&=&\frac{1}{2}\sup_{\sum_{l=0}^{n-1}\vert{ u_{l}\vert}^{2}=1} 2 \sum_{1\leq m < l \leq n-1}\alpha^{\vert l-m\vert} \vert u_{l}\vert \vert u_{m}\vert\\
&=&\sup_{\sum_{l=0}^{n-1}\vert{ u_{l}\vert}^{2}=1}\Big\vert  \sum_{1\leq m < l \leq n-1}\alpha^{l-m}  u_{l}\overline{u_{m}}\Big\vert=\omega_{2}( J_{n}(\alpha)).
\end{eqnarray*}
 We note that $ \bbg( J_{n}(\alpha))$ is the Toeplitz matrix associated with the Toeplitz form  $$\dfrac{1}{2}(P_{\alpha}(e^{it})-1)=\dfrac{\alpha(\cos t-\alpha)}{1-2\alpha \cos t+\alpha^{2}}=g(t).$$
To complete the proof of the proposition, we can easily observe that if $a$ and $b$ are arbitrary real number and $f(x)$ a Toeplitz form with $\gamma_{k}^{n}$ as eigenvalues then the eigenvalues of $a+bf(x)$ will be $a+b\gamma_{k}^{n}$. This shows that the eigenvalues of $ \bbg(J_{n}( \alpha))$ are $$\lambda'^{(n)}_{k}=\dfrac{1}{2}(\lambda_{k}^{(n)}-1)=\dfrac{\alpha(\cos t_{k}^{(n)}-\alpha)}{1-2\alpha \cos t_{1}^{(n)}+\alpha^{2}},~ 1\leq k\leq n .$$ Now, since  $g(t)$ is decreasing on the interval $[0,\pi]$ and $\bbg( J_{n}(\alpha)$ is symmetric then
\begin{eqnarray}
\omega_{2}( J_{n}(\alpha))&=&\omega_{2}(\bbg( J_{n}(\alpha)))\nonumber\\
&=&\max \left\lbrace \vert \lambda'^{(n)}_{k}\vert ; 1\leqslant k\leqslant n\right\rbrace \nonumber\\
&=&\max \left\lbrace  \lambda'^{(n)}_{1};-\lambda'^{(n)}_{n}\right\rbrace\nonumber \\
&=&\max\left\lbrace\dfrac{\alpha(\cos t_{1}^{(n)}-\alpha)}{1-2\alpha \cos t_{1}^{(n)}+\alpha^{2}};\dfrac{-\alpha(\cos t_{n}^{(n)}-\alpha)}{1-2\alpha \cos t_{n}^{(n)}+\alpha^{2}}\right\rbrace \\
&=&s_{n}(\alpha)\nonumber
\end{eqnarray}

where (3.6) is due to the fact that $\cos t_{n}^{(n)}$ is nonpositive and by using Proposition 3.1.
\end{proof}

\begin{corollary}\textit{ For $0\leqslant\alpha< 1$, we have
$$\omega_{2}(J_{2}(\alpha))=\dfrac{\alpha}{2}~.$$}
\end{corollary}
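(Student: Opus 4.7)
The plan is to apply Proposition 3.5 directly with $n = 2$, which reduces the task to computing $s_2(\alpha)$ explicitly. The only unknowns in the definition of $s_2(\alpha)$ are $\cos t_1^{(2)}$ and $\cos t_2^{(2)}$, and for $n = 2$ these can be obtained in closed form from the defining equation $p_n(\cos t) = 0$ in (3.1).

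First I would specialize (3.1) to $n = 2$. Using the identities $\sin 3t = \sin t\,(4\cos^2 t - 1)$ and $\sin 2t = 2\sin t\cos t$, the polynomial
\begin{equation*}
p_2(\cos t) = \frac{\sin 3t - 2\alpha \sin 2t + \alpha^2 \sin t}{\sin t}
\end{equation*}
simplifies to $4\cos^2 t - 4\alpha \cos t + (\alpha^2 - 1)$. Solving the resulting quadratic in $x = \cos t$ yields the two roots $\frac{\alpha + 1}{2}$ and $\frac{\alpha - 1}{2}$. Since these must be $\cos t_1^{(2)} > \cos t_2^{(2)}$, we identify $\cos t_1^{(2)} = (\alpha + 1)/2$ and $\cos t_2^{(2)} = (\alpha - 1)/2$.

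Next I would substitute these values into the two expressions defining $s_2(\alpha)$. A short calculation shows that the denominator $1 - 2\alpha \cos t_1^{(2)} + \alpha^2$ telescopes to $1 - \alpha$ while the numerator $\alpha(\cos t_1^{(2)} - \alpha)$ becomes $\alpha(1 - \alpha)/2$, so the first entry equals $\alpha/2$. A symmetric computation shows the second entry also equals $\alpha/2$. Hence $s_2(\alpha) = \alpha/2$, and Proposition 3.5 then gives $\omega_2(J_2(\alpha)) = \alpha/2$.

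There is no real obstacle here: the argument is a purely computational specialization of Proposition 3.5 to $n = 2$. The only mildly delicate point is the trigonometric identity used to reduce $p_2(\cos t)$ to a quadratic in $\cos t$, and then confirming the ordering $0 < t_1^{(2)} < t_2^{(2)} < \pi$ forces the assignment of roots indicated above (which is immediate since $(\alpha + 1)/2 > (\alpha - 1)/2$).
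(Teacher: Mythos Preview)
Your proposal is correct and follows essentially the same route as the paper: specialize the proposition $\omega_{2}(J_{n}(\alpha))=s_{n}(\alpha)$ to $n=2$, reduce $p_{2}(\cos t)$ to the quadratic $4\cos^{2}t-4\alpha\cos t+\alpha^{2}-1$, and read off the roots. The only cosmetic differences are that the paper labels this proposition as 3.2 (not 3.5), and it obtains the second eigenvalue via the trace identity $\mathrm{Tr}\,\mathcal{R}e(J_{2}(\alpha))=0$ rather than by substituting $\cos t_{2}^{(2)}=(\alpha-1)/2$ directly; both computations yield $\alpha/2$.
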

\begin{proof} This result is known, but it is interesting to notice that this result can also easily be obtained by using Proposition 3.2. Indeed  $$p_{2}(t)=\dfrac{\sin(3t)-2\alpha \sin(2t)+\alpha^{2}\sin t}{\sin t}=4\cos^{2}t-4\alpha\cos t+\alpha^{2}-1$$ Therefore, we obtain: $\cos t_{1}^{(2)}=\dfrac{\alpha+1}{2}$ and  $\lambda'^{(2)}_{1}=\dfrac{\alpha}{2}$ . Now since $Tr(J_{2}(\alpha))=0$ then we have $\lambda'^{(2)}_{2}=-\dfrac{\alpha}{2}$. This completes the proof.
\end{proof}
\begin{corollary} \textit{For $0\leqslant\alpha< 1$, we have
 $$\omega_{2}(J_{3}(\alpha))=\dfrac{\alpha(\sqrt{\alpha^{2}+8}-3\alpha)}{4+2\alpha^{2}-2\alpha\sqrt{\alpha^{2}+8}}~.$$}
\end{corollary}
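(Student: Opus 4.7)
The plan is to invoke Proposition 3.2 directly. That proposition gives $\omega_2(J_3(\alpha)) = s_3(\alpha)$, so the task reduces to identifying $\cos t_1^{(3)}$ and $\cos t_3^{(3)}$, the largest and smallest cosines of zeros of the polynomial $p_3$, and then plugging them into the two expressions inside the max defining $s_3(\alpha)$.

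Using the factorization (3.3) with $n=3$, one has
\[
p_3(t) = \frac{2}{\sin t}\bigl(\sin 2t - \alpha \sin t\bigr)\bigl(\cos 2t - \alpha \cos t\bigr).
\]
Since $\sin 2t = 2\sin t \cos t$, the first factor simplifies to $\sin t\,(2\cos t - \alpha)$; combined with $\cos 2t = 2\cos^2 t - 1$, this shows that the zeros of $p_3$ in the variable $x = \cos t$ are the three numbers $\alpha/2$ and the two roots of $2x^2 - \alpha x - 1 = 0$, namely $x = (\alpha \pm \sqrt{\alpha^2+8})/4$. Because $\sqrt{\alpha^2+8} > 2$, the largest of the three is $(\alpha + \sqrt{\alpha^2+8})/4$ and the smallest is $(\alpha - \sqrt{\alpha^2+8})/4$, so these are $\cos t_1^{(3)}$ and $\cos t_3^{(3)}$ respectively.

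Substituting $\cos t_1^{(3)}$ into the first entry of $s_3(\alpha)$ and simplifying yields exactly the stated expression. The main point to verify, and the only nontrivial step in the argument, is that this entry dominates the second (the one built from $\cos t_3^{(3)}$). After noting that both denominators are positive for $0 \leq \alpha < 1$ (for the more delicate one, $(2+\alpha^2)^2 > \alpha^2(\alpha^2+8)$ iff $\alpha^2 < 1$), cross-multiplying and cancelling symmetric terms reduces the inequality to $8\alpha(1-\alpha^2) \geq 0$, which is immediate. Hence $\omega_2(J_3(\alpha))$ equals the first entry of $s_3(\alpha)$, giving the announced formula.
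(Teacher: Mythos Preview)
Your proof is correct and follows essentially the same route as the paper's: both factor $p_3$ via (3.3), read off $\cos t_k^{(3)}$ for $k=1,2,3$, and plug $\cos t_1^{(3)}$ and $\cos t_3^{(3)}$ into the two candidates from Proposition~3.2. The only difference is that you actually carry out the comparison $\lambda'^{(3)}_1 \ge |\lambda'^{(3)}_3|$ (correctly reducing it to $8\alpha(1-\alpha^2)\ge 0$), whereas the paper simply records both values and declares the proof complete; in that respect your argument is slightly more thorough.
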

\begin{proof} We have $$p_{3}(t)=\dfrac{2}{\sin t}\big( \sin(2t)-\alpha\sin t \big) \big( \cos(2t)-\alpha \cos t\big) $$ this implies that $\cos t_{1}^{(3)}=\dfrac{\alpha+\sqrt{\alpha^{2}+8}}{4}, \cos t_{2}^{(3)}=\dfrac{\alpha}{2}$ and $\cos t_{3}^{(3)}=\dfrac{\alpha-\sqrt{\alpha^{2}+8}}{4}$ then $$\lambda'^{(3)}_{1}=\dfrac{\alpha(\cos t_{1}^{(3)}-\alpha)}{1-2\alpha \cos t_{1}^{(3)}+\alpha^{2}}=\dfrac{\alpha(\sqrt{\alpha^{2}+8}-3\alpha)}{4+2\alpha^{2}-2\alpha\sqrt{\alpha^{2}+8}}$$ and $$\lvert \lambda'^{(3)}_{3}\lvert=\dfrac{-\alpha(\cos t_{3}^{(3)}-\alpha)}{1-2\alpha \cos t_{3}^{(3)}+\alpha^{2}}=\dfrac{\alpha(\sqrt{\alpha^{2}+8}+3\alpha)}{4+2\alpha^{2}+2\alpha\sqrt{\alpha^{2}+8}}.$$ The proof is complete.
\end{proof}
\begin{corollary} \textit{For each integer $n \geq 4$ and $\alpha\leqslant\sqrt{\cos\frac{2\pi}{n+1}} $, we have$$\omega_{2}(J_{n}(\alpha))=\dfrac{\alpha(\cos t_{1}^{(n)}-\alpha)}{1-2\alpha \cos t_{1}^{(n)}+\alpha^{2}}~.$$}
\end{corollary}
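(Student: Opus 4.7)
My plan starts from Proposition~3.2, which gives
\begin{equation*}
\omega_2(J_n(\alpha))=\max\{\lambda'^{(n)}_{1},\,-\lambda'^{(n)}_{n}\},
\end{equation*}
so it suffices to show that, under the hypothesis $\alpha\leqslant\sqrt{\cos\tfrac{2\pi}{n+1}}$, one has $\lambda'^{(n)}_{1}\geqslant -\lambda'^{(n)}_{n}$. The case $\alpha=0$ is trivial since both quantities vanish; so I assume $\alpha>0$ and write $A=\cos t_1^{(n)}$, $B=\cos t_n^{(n)}$ and $c=\cos\tfrac{\pi}{n+1}$. Dividing through by $\alpha>0$, the desired inequality becomes
\begin{equation*}
\frac{A-\alpha}{1-2\alpha A+\alpha^2}\;\geqslant\;\frac{\alpha-B}{1-2\alpha B+\alpha^2}.
\end{equation*}

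The next step is to invoke the interlacing $t_1^{(n)}\leqslant x_1=\tfrac{\pi}{n+1}$ and $t_n^{(n)}\leqslant x_n=\tfrac{n\pi}{n+1}$ recorded in Section~3.1, which give $A\geqslant c$ and $-B\leqslant c$. I then introduce the two auxiliary functions
\begin{equation*}
f(x)=\frac{x-\alpha}{1-2\alpha x+\alpha^2},\qquad h(y)=\frac{\alpha+y}{1+2\alpha y+\alpha^2};
\end{equation*}
a quick differentiation yields $f'(x)=h'(y)=(1-\alpha^2)/(\cdot)^2>0$, so both are strictly increasing on $[-1,1]$. Since the left-hand side of the target inequality is $f(A)$ and the right-hand side is $h(-B)$, monotonicity gives $f(A)\geqslant f(c)$ and $h(-B)\leqslant h(c)$. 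Thus it is enough to prove the scalar inequality $f(c)\geqslant h(c)$.

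Clearing denominators, a direct expansion yields
\begin{equation*}
(c-\alpha)(1+2\alpha c+\alpha^2)-(\alpha+c)(1-2\alpha c+\alpha^2)=2\alpha(2c^2-1-\alpha^2),
\end{equation*}
and the double-angle formula $2c^2-1=\cos\tfrac{2\pi}{n+1}$ transforms $f(c)\geqslant h(c)$ into exactly $\alpha^2\leqslant\cos\tfrac{2\pi}{n+1}$, which is precisely the hypothesis. The restriction $n\geqslant 4$ enters only to guarantee $\cos\tfrac{2\pi}{n+1}>0$, so that the bound $\sqrt{\cos\tfrac{2\pi}{n+1}}$ is non-vacuous.

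The main obstacle I anticipate is conceptual rather than computational: one must notice that replacing $A$ and $|B|$ by the single value $c=\cos\tfrac{\pi}{n+1}$ coming from the crude interlacing bounds $t_k^{(n)}\leqslant x_k$ is already sharp enough to recover the hypothesized threshold. The finer lower bound $t_1^{(n)}\geqslant\tfrac{2}{n+1}\arccos\alpha$ of Proposition~3.1 is never invoked; the monotonicity of $f$ and $h$ absorbs all the slack and reproduces exactly $\alpha^2\leqslant\cos\tfrac{2\pi}{n+1}$.
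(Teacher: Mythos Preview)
Your proof is correct and follows essentially the same route as the paper: both arguments use the interlacing bounds $t_1^{(n)}\leqslant\tfrac{\pi}{n+1}$ and $t_n^{(n)}\leqslant\tfrac{n\pi}{n+1}$ together with the monotonicity of $g(t)=\alpha(\cos t-\alpha)/(1-2\alpha\cos t+\alpha^2)$ to reduce the comparison $\lambda'^{(n)}_{1}\geqslant -\lambda'^{(n)}_{n}$ to the scalar inequality at $c=\cos\tfrac{\pi}{n+1}$, and the identical expansion $2\alpha(2c^{2}-1-\alpha^{2})=2\alpha(\cos\tfrac{2\pi}{n+1}-\alpha^{2})$ then recovers the hypothesis. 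Your packaging via the auxiliary increasing functions $f$ and $h$ is slightly more explicit than the paper's phrasing in terms of $g$, but the substance is the same.
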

\begin{proof} It follows from the hypothesis of Corollary 3.5 that
\begin{equation} 
t_{1}^{(n)}\leq \frac{\pi}{n+1}\leq \arccos\sqrt{\frac{1+\alpha^{2}}{2}}\leq \arccos(\alpha)\leqslant t_{n}^{(n)}\leqslant  \frac{n\pi}{n+1}
\end{equation}
 then $$\lambda'^{(n)}_{1}=\dfrac{\alpha(\cos t_{1}^{(n)}-\alpha)}{1-2\alpha \cos t_{1}^{(n)}+\alpha^{2}}$$ and
$$\vert\lambda'^{(n)}_{n}\vert=-\lambda'^{(n)}_{n}=\dfrac{-\alpha(\cos t_{n}^{(n)}-\alpha)}{1-2\alpha \cos t_{n}^{(n)}+\alpha^{2}}$$
In view of the inequality (3.7), $g(\arccos \alpha)=0$ and the fact that $g$ is decreasing in $[0,\pi]$, it suffices to prove that $g(\frac{\pi}{n+1}) \geq \vert g(\frac{n\pi}{n+1})\vert$. We have
\begin{eqnarray*}
 g(\frac{\pi}{n+1})-\vert g(\frac{n\pi}{n+1})\vert&=&\dfrac{\alpha(\cos \frac{\pi}{n+1}-\alpha) }{1-2\alpha \cos \frac{\pi}{n+1}+\alpha^{2}}- \dfrac{\alpha(\cos \frac{\pi}{n+1}+\alpha) }{1+2\alpha \cos \frac{\pi}{n+1}+\alpha^{2}}\\
&=&\dfrac{2\alpha^{2}(2\cos^{2}\frac{\pi}{n+1}-\alpha^{2}-1)}{(1-2\alpha \cos \frac{\pi}{n+1}+\alpha^{2})(1+2\alpha \cos \frac{\pi}{n+1}+\alpha^{2})}\\
&=&\dfrac{2\alpha^{2}(\cos\frac{2\pi}{n+1}-\alpha^{2})}{(1-2\alpha \cos \frac{\pi}{n+1}+\alpha^{2})(1+2\alpha \cos \frac{\pi}{n+1}+\alpha^{2})}\\
&\geq&0.
\end{eqnarray*}
The result follows.
\end{proof}
\subsection{The numerical radius of the compressed shift} In this section, we will focus on the particular case where
$$\phi(z)=\left( \dfrac{z-\alpha}{1-\overline{\alpha}z}\right)^n.$$
First, we notice some properties for the general case where $\phi$ is a finite Blashke product : $\phi(z)= \prod_{j=1}^{n}~\dfrac{z-\alpha_{j}}{1-\overline{\alpha_{j}}z}$. For each $\lambda$ in the unit disc, we define the evaluation functional $k_{\lambda} \in \H2$ by the requirement that $f(\lambda)=<f,k_{\lambda}> $. Thus$$k_{\lambda}(z)=\dfrac{1}{1-\overline{\lambda}z}$$ and  $\left\lbrace e_{1}, \dots,e_{n} \right\rbrace $ the collection of functions of $H(\phi)$ defined as follows :
\begin{equation*}
e_{1}(z)= \left(1-\vert\alpha_{1}\vert^{2}\right)^{\frac{1}{2}}  ~\dfrac{1}{1-\overline{\alpha_{1}}z}
\end{equation*}
 and
$$e_{k}(z)= \left(1-\vert\alpha_{k}\vert^{2}\right)^{\frac{1}{2}}~\dfrac{1}{1-\overline{\alpha_{k}}z}~~ \prod_{j=1}^{k-1}\dfrac{z-\alpha_{j}}{1-\overline{\alpha_{j}}z}$$ for any $k=2,...,n$.

It is known that $\left\lbrace e_{1}, \dots,e_{n} \right\rbrace $ is an orthonormal basis of $H(\phi)$ and with respect to this basis the matrix of $\bbs(\phi)$ is given by $\left[ a_{lk} \right]$, where
 $$ a_{lk}=\left\{
    \begin{array}{ll}
         \overline{\alpha_{l}} & \mbox{if } l=k \\ 
         \sigma_{l}\sigma_{l+1}& \mbox{if } k=l+1 \\ 
         \sigma_{l}\sigma_{k} \prod_{j=l+1}^{k-1} (-\alpha_{j}) & \mbox{if } k>l+1\\
        0 & \mbox{unless}
    \end{array}
\right.$$
and $ \sigma_{k}=\left(1-\vert\alpha_{k}\vert^{2}\right)^{\frac{1}{2}}$, for each $1\leqslant k \leqslant n$.

Indeed, for $k>l+1$, we have
\begin{eqnarray*}
<{\bbs}(\phi)e_{k},e_{l}>&=&\sigma_{k}\sigma_{l}\int_{0}^{2\pi}\dfrac{1}{1-\overline{\alpha_{k}}e^{i\theta}}~~\dfrac{e^{-i\theta}}{1-\alpha_{l}e^{-i\theta}}~\prod_{j=l}^{k-1}\dfrac{e^{i\theta}-\alpha_{j}}{1-\overline{\alpha_{j}}e^{i\theta}}\frac{d\theta}{2\pi}\\
&=&\sigma_{k}\sigma_{l}\int_{0}^{2\pi}\dfrac{1}{1-\overline{\alpha_{k}}e^{i\theta}}~~\dfrac{e^{-i\theta}}{1-\alpha_{l}e^{-i\theta}}~\dfrac{e^{i\theta}-\alpha_{l}}{1-\overline{\alpha_{l}}e^{i\theta}}~~\prod_{j=l+1}^{k-1}\dfrac{e^{i\theta}-\alpha_{j}}{1-\overline{\alpha_{j}}e^{i\theta}}\frac{d\theta}{2\pi}\\
&=&\sigma_{k}\sigma_{l}\int_{0}^{2\pi}\dfrac{1}{1-\overline{\alpha_{k}}e^{i\theta}}~~\dfrac{1}{1-\overline{\alpha_{l}}e^{i\theta}} ~~\prod_{j=l+1}^{k-1}\dfrac{e^{i\theta}-\alpha_{j}}{1-\overline{\alpha_{j}}e^{i\theta}}\frac{d\theta}{2\pi}\\
&=&\sigma_{k}\sigma_{l}\prod_{j=l+1}^{k-1} (-\alpha_{j}).
\end{eqnarray*}
\ Using the same scheme as before,we prove easily that $<{\bbs(\phi)}e_{k+1},e_{k}>=\sigma_{k}\sigma_{k+1}$ and  $<{\bbs}(e_{k}),e_{l}>=0$ if $k<l$.

\ Finally
\begin{eqnarray*}
<{\bbs}(\phi)e_{k},e_{k}>&=&\sigma_{k}^{2}\int_{0}^{2\pi}\dfrac{1}{1-\overline{\alpha_{k}}e^{i\theta}}\dfrac{e^{-i\theta}}{1-\alpha_{k}e^{-i\theta}}\frac{d\theta}{2\pi}\\
&=&\sigma_{k}^{2}< K_{\alpha_{k}},\dfrac{z}{1-\overline{\alpha_{k}}z}>\\
&=&\overline{\alpha_{k}}
\end{eqnarray*}
In the sequel of the paper, $\phi$ denotes the finite Blashke product with unique zero $\alpha$:
$$
\phi(z)=\left( \dfrac{z-\alpha}{1-\overline{\alpha}z}\right)^n.
$$
$\bbs(\phi)$ gets the following matricial representation:
$$
\left(
\begin{array}{cccccc}
\overline{\alpha}&\sigma           &-\sigma\alpha &\cdots &\cdots           &\sigma(-\alpha)^{n-2} \\
0                &\overline{\alpha}& \sigma       &\ddots &                 &\vdots \\
\vdots           &       \ddots    & \ddots       &\ddots &\ddots           &\vdots\\
\vdots           &                 & \ddots       &\ddots &\ddots           &-\sigma\alpha\\
\vdots           &                 &              &\ddots &\overline{\alpha}&\sigma\\
0                &       \dots     &  \dots       &\dots  &         0       &\overline{\alpha}
\end{array}
\right)
$$
where $ \sigma=1-\vert \alpha\vert^{2}$.

\begin{proposition} \textit{For $0\leqslant\alpha<1$, we have 
$$\omega_{2}(\bbg(\bbs(\phi)))=m_{n}(\alpha).$$}
\end{proposition}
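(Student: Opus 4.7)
The plan is to realize $\bbg(\bbs(\phi))$ as an affine-linear combination of $I_n$ and the KMS matrix $K_n(-\alpha)$, and then to transfer the spectral information recalled in Section 3.1.

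First I read off, from the matrix representation of $\bbs(\phi)$ displayed just before the statement, that for $\alpha\in[0,1)$ (so that $\overline{\alpha}=\alpha$) the matrix $\bbs(\phi)$ is upper triangular with diagonal $\alpha$ and $(l,k)$-entry $\sigma(-\alpha)^{k-l-1}$ for $k>l$, where $\sigma=1-\alpha^{2}$. Consequently $\bbg(\bbs(\phi))=\tfrac{1}{2}(\bbs(\phi)+\bbs(\phi)^{\ast})$ is a real symmetric Toeplitz matrix whose diagonal entry is $\alpha$ and whose $j$-th off-diagonal entry is $\tfrac{\sigma}{2}(-\alpha)^{|j|-1}$. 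Comparing these entries with those of the KMS matrix $K_n(-\alpha)$ (whose $(r,s)$-entry is $(-\alpha)^{|r-s|}$) gives, for $\alpha>0$, the identity
\begin{equation*}
\bbg(\bbs(\phi))=\frac{1+\alpha^{2}}{2\alpha}\,I_{n}-\frac{1-\alpha^{2}}{2\alpha}\,K_{n}(-\alpha),
\end{equation*}
while $\alpha=0$ reduces to the classical real part of $\bbS_{n}^{\ast}$ already covered by Section 1.

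Next, the similarity $K_{n}(-\alpha)=DK_{n}(\alpha)D^{-1}$, with $D=\mathrm{diag}(1,-1,1,\dots,(-1)^{n-1})$, shows that $K_{n}(-\alpha)$ has exactly the same spectrum $\{\lambda_{k}^{(n)}=P_{\alpha}(e^{it_{k}^{(n)}})\}_{1\leq k\leq n}$ as $K_{n}(\alpha)$ from Section 3.1. Plugging $\lambda_{k}^{(n)}=\frac{1-\alpha^{2}}{1-2\alpha\cos t_{k}^{(n)}+\alpha^{2}}$ into the displayed identity and simplifying (the common factor $2\alpha$ cancels in numerator and denominator) produces the closed-form eigenvalues of $\bbg(\bbs(\phi))$:
\begin{equation*}
\mu_{k}=\frac{2\alpha-(1+\alpha^{2})\cos t_{k}^{(n)}}{1-2\alpha\cos t_{k}^{(n)}+\alpha^{2}},\qquad 1\leq k\leq n.
\end{equation*}

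To finish, I consider the Möbius-like function $y\mapsto \frac{2\alpha-(1+\alpha^{2})y}{1-2\alpha y+\alpha^{2}}$ on $[-1,1]$; a direct differentiation gives derivative equal to $-(1-\alpha^{2})^{2}/(1-2\alpha y+\alpha^{2})^{2}<0$, so this function is strictly decreasing on $[-1,1]$. Since $\cos t_{k}^{(n)}$ itself strictly decreases in $k$, the sequence $(\mu_{k})$ is strictly increasing, and hence $\max_{k}|\mu_{k}|=\max(|\mu_{1}|,|\mu_{n}|)$. Unfolding the definition of $m_{n}(\alpha)$ shows that its two terms are exactly $|\mu_{1}|$ and $\mu_{n}=|\mu_{n}|$ (the latter being positive because $\cos t_{n}^{(n)}<0$), so the maximum equals $m_{n}(\alpha)$. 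As $\bbg(\bbs(\phi))$ is Hermitian, its numerical radius coincides with its spectral radius (see the remark closing Section 3.1), yielding $\omega_{2}(\bbg(\bbs(\phi)))=m_{n}(\alpha)$. The main obstacle, in my view, is spotting the clean algebraic identification with a scalar combination of $I_{n}$ and $K_{n}(-\alpha)$; once this is in hand the argument parallels the proof of Proposition 3.4 and no new analytic input beyond the Szegő-type spectrum of the KMS matrix is needed.
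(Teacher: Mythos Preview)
Your proof is correct and follows the same strategy as the paper: identify $\bbg(\bbs(\phi))$ as an affine combination of the identity and a KMS matrix, read off its spectrum from the known Szeg\H{o}-type eigenvalues $P_{\alpha}(e^{it_{k}^{(n)}})$, and use monotonicity to locate the extremal absolute value at $k=1$ or $k=n$. The only cosmetic difference is that the paper writes $\bbg(\bbs(\phi))$ directly as the Toeplitz matrix of the symbol $h(t)=\frac{(1+\alpha^{2})\cos t-2\alpha}{1-2\alpha\cos t+\alpha^{2}}$ (suppressing the alternating signs), whereas you make the sign bookkeeping explicit via $K_{n}(-\alpha)=DK_{n}(\alpha)D^{-1}$; your route is arguably cleaner on this point.
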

\begin{proof} First, notice that where $ \alpha=0$, then
$$\bbg(\bbs(\phi))=\dfrac{1}{2}
\left(
\begin{array}{ccccc}
0     & 1   &  0   & 0    & \dots\\
1     & 0   &  1   & 0    &   \dots               \\
0     & 1   &  0   & 1    &\dots          \\
0     &  0  &   1  & 0    &\dots         \\
\dots &\dots&\dots & \dots &\dots

\end{array}
\right)
.$$
In this case the eigenvalues are $\cos\dfrac{k\pi}{n+1}$, for $k=1,\dots,n$. For the proof there are many references, we refer the reader for example to \cite{Szego} page 67 or \cite{Bottcher} page 35, therefore $\omega_{2}(\bbg(\bbs(\phi)))=\cos\dfrac{\pi}{n+1}=m_{n}(0)$. Then we can limit our study to the case $\alpha\neq 0 $. Now, notice that 
$$\bbg(\bbs(\phi))=\dfrac{1-\alpha^{2}}{2\alpha}
\left(
\begin{array}{cccc}
-\dfrac{2\alpha^{2}}{1-\alpha^{2}}&\alpha&\cdots&\alpha^{n-1}\\
\alpha&\ddots&\ddots &  \vdots                \\
\vdots&  \ddots               &  \ddots                      &\alpha          \\
\alpha^{n-1}&  \cdots               &   \alpha                     &   -\dfrac{2\alpha^{2}}{1-\alpha^{2}}     
\end{array}
\right).
$$
Here $\bbg(\bbs(\phi))$ is the Toeplitz matrix associated with the Toeplitz form: $$\dfrac{1-\alpha^{2}}{2\alpha}(P_{\alpha}(e^{it})-1-\dfrac{2\alpha^{2}}{1-\alpha^{2}})=\dfrac{(1+\alpha^{2})\cos t-2\alpha}{1-2\alpha \cos t+\alpha^{2}}=h(t).$$ 
Since $h(t)$ is monotonic on $[0,\pi]$, thus with the same argument that in the proof of Proposition 3.2, we may assume that: 
\begin{eqnarray*}
\omega_{2}(\bbg(\bbs(\phi)))&=&
\max\left\lbrace\tfrac{\lvert(1+\alpha^{2})\cos t_{1}^{n}-2\alpha\lvert}{1-2\alpha \cos t_{1}^{n}+\alpha^{2}},\tfrac{-(1+\alpha^{2})\cos t_{n}^{n}+2\alpha}{1-2\alpha \cos t_{n}^{n}+\alpha^{2}}\right\rbrace\\
&=&m_{n}(\alpha).
\end{eqnarray*}
This completes the proof.
\end{proof}
\newpage
\begin{theorem}\textit{ Let $\phi(z)=\left( \dfrac{z-\alpha}{1-\overline{\alpha}z}\right)^n $ with $\alpha\in\C$ and $\lvert\alpha\lvert<1$.}
\begin{enumerate}
 \item  \textit{The numerical radius of $\bbs(\phi)$ is independent from the argument of $\alpha$ and for $0 \leqslant \alpha<1 $ the numerical range of  $\bbs(\phi)$ is symmetric with respect to the real axis.}

\item  \textit{For $n\geq2$ , we have $$m_{n}(\lvert\alpha\lvert)\leqslant\omega_{2}(\bbs(\phi))\leqslant M_{n}(\lvert\alpha\lvert).$$}
\end{enumerate}
\end{theorem}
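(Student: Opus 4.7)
The plan splits along the two parts: part (1) is obtained by a diagonal unitary conjugation, while part (2) follows from combining Proposition 3.6 (lower bound) with a decomposition $\bbs(\phi) = \alpha I + R$, where $R$ is expressible via the operator $J_n$ of Proposition 3.2 (upper bound).

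For part (1), I would write $\alpha = |\alpha|e^{i\theta}$, set $\phi_0(z) = \left(\tfrac{z-|\alpha|}{1-|\alpha|z}\right)^n$, and introduce the diagonal unitary $D = \mathrm{diag}(1,e^{-i\theta},\ldots,e^{-i(n-1)\theta})$. A direct inspection of the matrix displayed just before Proposition 3.6 shows $e^{i\theta}\bbs(\phi) = D\,\bbs(\phi_0)\,D^{\ast}$: the prefactor $e^{i\theta}$ converts $\overline{\alpha}$ into $|\alpha|$ on the diagonal, while the phase $e^{i(k-1)\theta}$ concealed inside $(-\alpha)^{k-1}$ at position $(l,l+k)$ is cancelled exactly by the factor $e^{-ik\theta}$ contributed by $D(\cdot)D^{\ast}$. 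Invariance of $\omega_2$ under unitary equivalence and under multiplication by a unimodular scalar then gives $\omega_2(\bbs(\phi)) = \omega_2(\bbs(\phi_0))$, which is the independence from $\arg\alpha$. When $\alpha$ is real, the matrix of $\bbs(\phi)$ is real, so coordinate-wise conjugation $x\mapsto\overline{x}$ yields $\langle \bbs(\phi)\overline{x},\overline{x}\rangle = \overline{\langle \bbs(\phi)x,x\rangle}$, proving that $W(\bbs(\phi))$ is symmetric about the real axis.

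By part (1) I may assume $0\leq\alpha<1$ throughout part (2). The lower bound is essentially free: for any $T$ the elementary inequality $|\mathrm{Re}\langle Tx,x\rangle|\leq|\langle Tx,x\rangle|$ gives $\omega_2(\bbg(T))\leq\omega_2(T)$, and Proposition 3.6 identifies $\omega_2(\bbg(\bbs(\phi)))$ with $m_n(\alpha)$. For the upper bound I would decompose $\bbs(\phi)=\alpha I+R$, where $R$ is the strictly upper triangular remainder with $R_{l,l+k}=(1-\alpha^2)(-\alpha)^{k-1}$ for $k\geq 1$. Comparing with $J_n$ from Proposition 3.2 shows, for $\alpha>0$, the key relation $R=-\tfrac{1-\alpha^{2}}{\alpha}\,J_n(-\alpha)$; and $J_n(-\alpha)$ is unitarily equivalent to $J_n(\alpha)$ via conjugation by $\mathrm{diag}(1,-1,1,-1,\ldots)$. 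Proposition 3.2 therefore gives $\omega_2(R)=\tfrac{1-\alpha^{2}}{\alpha}\,s_n(\alpha)$. Subadditivity of the numerical radius, combined with $\omega_2(\alpha I)=\alpha$, then yields $\omega_2(\bbs(\phi))\leq\alpha+\tfrac{1-\alpha^{2}}{\alpha}\,s_n(\alpha)$; the degenerate case $\alpha=0$ reduces to the Haagerup--de la Harpe formula (1.1).

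What remains is the routine algebra that $\alpha+\tfrac{1-\alpha^{2}}{\alpha}s_n(\alpha)=M_n(\alpha)$: distributing the maximum inside $s_n(\alpha)$, the first branch simplifies to $\tfrac{(1-3\alpha^{2})\cos t_1^{(n)} + 2\alpha^{3}}{1-2\alpha\cos t_1^{(n)}+\alpha^{2}}$ and the second to $\tfrac{-(1+\alpha^{2})\cos t_n^{(n)} + 2\alpha}{1-2\alpha\cos t_n^{(n)}+\alpha^{2}}$, which are exactly the two entries of $M_n(\alpha)$. The conceptual hurdle I expect is precisely the identification of the nilpotent part of $\bbs(\phi)$ as a scalar multiple of $J_n(-\alpha)$; once this bridge to the KMS-matrix machinery of Section 3.1 is in place, the rest is either an invocation of an earlier proposition or symbolic manipulation.
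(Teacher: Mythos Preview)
Your proof is correct and follows essentially the same route as the paper: your diagonal-unitary conjugation is exactly the substitution $v_l=e^{il\theta}u_l$ the paper carries out inside the supremum formula (3.8), and your decomposition $\bbs(\phi)=\alpha I+R$ together with subadditivity of $\omega_2$ is the operator-level reading of the paper's direct estimate $\bigl|\alpha-\tfrac{1-\alpha^2}{\alpha}\sum(-\alpha)^{l-m}u_l\overline{u_m}\bigr|\leq\alpha+\tfrac{1-\alpha^2}{\alpha}\,\omega_2(J_n(\alpha))$. The only difference is packaging---the paper manipulates the quadratic form while you manipulate the matrix---and your explicit verification that $\alpha+\tfrac{1-\alpha^2}{\alpha}\,s_n(\alpha)=M_n(\alpha)$ in fact fills in a step the paper leaves to the reader.
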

\begin{proof}
For $\alpha\neq0 $, and for $t=\arg(\alpha)$, we have:
\begin{eqnarray}
\omega_{2}(\bbs(\phi))&=&\sup_{\Vert u \Vert_{2}=1}\vert<\bbs(\phi)u,u> \vert \nonumber\\
&=&\sup_{\sum_{l=0}^{n-1}\vert{ u_{l}\vert}^{2}=1}\Big\vert \overline {\alpha}-\frac{1-{\vert \alpha \vert}^{2}}{\alpha}\sum_{1\leq m < l \leq n-1}\left(-\alpha\right)^{l-m}u_{l}\overline{u_{m}}\Big\vert \nonumber\\
&=&\sup_{\sum_{l=0}^{n-1}\vert{ u_{l}\vert}^{2}=1}\Big\vert \vert\alpha\vert-\frac{1-{\vert \alpha \vert}^{2}}{\vert\alpha\vert}\sum_{1\leq m < l \leq n-1}\left(-\alpha\right)^{l-m}u_{l}\overline{u_{m}}\Big\vert \nonumber\\
&=&\sup_{\sum_{l=0}^{n-1}\vert{ u_{l}\vert}^{2}=1}\Big\vert \vert\alpha\vert-\frac{1-{\vert \alpha \vert}^{2}}{\vert\alpha\vert}\sum_{1\leq m < l \leq n-1}\left(-\lvert\alpha\lvert\right)^{l-m}e^{ilt}u_{l}\overline{e^{imt}u_{m}}\Big\vert \nonumber\\
&=&\sup_{\sum_{l=0}^{n-1}\vert{ v_{l}\vert}^{2}=1}\Big\vert \vert\alpha\vert-\frac{1-{\vert \alpha \vert}^{2}}{\vert\alpha\vert}\sum_{1\leq m < l \leq n-1}\left( -\vert\alpha\vert\right) ^{l-m}v_{l}\overline{v_{m}}\Big\vert .
\end{eqnarray}
The last equality implies that the numerical radius of $\bbs(\phi)$ is independant from the argument of $\alpha$. Hence we can suppose that $0<\alpha<1$. Now assume that $z$ is in $W(\bbs(\phi))$, then there is $u=(u_{0},\dots,u_{n-1})$ a unit vector in $\C^{n}$ such that 
\begin{eqnarray*}
z&=&<\bbs(\phi)u,u>\\
&=&\alpha-\frac{1-{ \alpha }^{2}}{\alpha}\sum_{1\leq m < l \leq n-1}\left( -\alpha\right) ^{l-m}u_{l}\overline{u_{m}}
\end{eqnarray*}
and 
\begin{eqnarray*}
 \overline{z}&=&\alpha-\frac{1-{ \alpha }^{2}}{\alpha}\sum_{1\leq m < l \leq n-1}\left( -\alpha\right)^{l-m}u_{m}\overline{u_{l}}\\
&=&<\bbs(\phi)\overline{u},\overline{u}>.
\end{eqnarray*}
This implies that $\overline{z}$ is in $W(\bbs(\phi))$ and $(1)$ holds.

As remarked before, we can restrict our study to the case where $0<\alpha<1$ and from (3.8) we have:
\begin{eqnarray*}
 \omega_{2}(\bbs(\phi)) &\leq& \alpha+\frac{1-{ \alpha }^{2}}{\alpha}\sup_{\sum_{l=0}^{n-1}\vert{ u_{l}\vert}^{2}=1}\Big\vert\sum_{1\leq m < l \leq n-1}\alpha^{l-m}u_{l}\overline{u_{m}}\Big\vert\\
&=&\alpha+\frac{1-{\alpha }^{2}}{\alpha}\omega_{2}(J_{n}(\alpha))\\
&=&M_{n}(\alpha).
\end{eqnarray*}
On the other hand, it is obvious to note that $\omega_{2}(\bbg T)\leqslant \omega_{2}(T)$ for each bounded operator $T$, an application of the Proposition 3.6 completes the proof of $(2)$.
\end{proof}
\begin{corollary}
 \textit{For $n=2$ and $0 \leqslant \alpha<1 $; we have: $$ \omega_{2}(\bbs(\phi))=\dfrac{1+2\alpha-\alpha^{2}}{2}~~.$$}
\end{corollary}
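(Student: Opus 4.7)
The plan is to apply Theorem 3.7 with $n=2$ and to verify that the lower bound $m_2(\alpha)$ and the upper bound $M_2(\alpha)$ both collapse to $(1+2\alpha-\alpha^2)/2$, forcing $\omega_2(\bbs(\phi))$ to equal this common value.

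First, by assertion (1) of Theorem 3.7 the numerical radius depends only on $|\alpha|$, so I may assume $0\leq\alpha<1$. Next, I will reuse the computation in the proof of Corollary 3.4: for $n=2$ the roots of $p_2$ satisfy $\cos t_1^{(2)}=(\alpha+1)/2$ and $\cos t_2^{(2)}=(\alpha-1)/2$. The key observation is that the denominators $1-2\alpha\cos t_k^{(2)}+\alpha^2$ occurring throughout the definitions of $m_2(\alpha)$ and $M_2(\alpha)$ collapse to $1-\alpha$ and $1+\alpha$ respectively, so the numerators will factor cleanly.

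For the upper bound, substituting these cosines into each of the two candidates defining $M_2(\alpha)$ gives a numerator divisible by the corresponding denominator, and the two quotients agree: each equals $(1+2\alpha-\alpha^2)/2$. Hence $M_2(\alpha)=(1+2\alpha-\alpha^2)/2$. For the lower bound, the second candidate in $m_2(\alpha)$ simplifies again to $(1+2\alpha-\alpha^2)/2$, while the first candidate reduces to $|\alpha^2+2\alpha-1|/2$. A short case split on the sign of $\alpha^2+2\alpha-1$, which changes at $\alpha=\sqrt{2}-1$, shows that this quantity never exceeds $(1+2\alpha-\alpha^2)/2$ on $[0,1)$; therefore $m_2(\alpha)=(1+2\alpha-\alpha^2)/2$ as well, and the squeeze from Theorem 3.7(2) yields the claimed equality.

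The only mildly delicate step is the sign analysis in bounding the first term of $m_2(\alpha)$, since the relevant numerator changes sign inside $[0,1)$; both subcases, however, produce the required inequality by elementary algebra using $\alpha<1$. As a sanity check one may verify the result directly from the explicit $2\times 2$ matricial representation of $\bbs(\phi)$ displayed just before Proposition 3.6, combined with the classical fact that the numerical radius of a $2\times 2$ upper-triangular matrix with equal diagonal entries $a$ and off-diagonal entry $b$ equals $|a|+|b|/2$; this reproduces $(1+2\alpha-\alpha^2)/2$ immediately.
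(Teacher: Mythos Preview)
Your proposal is correct and follows exactly the approach indicated in the paper: the paper's own proof consists of the single remark that for $n=2$ the quantities $m_{2}(\alpha)$ and $M_{2}(\alpha)$ coincide, whence Theorem~3.7(2) gives the result. You have simply supplied the arithmetic that the paper omits (using $\cos t_{1}^{(2)}=(\alpha+1)/2$ and $\cos t_{2}^{(2)}=(\alpha-1)/2$ from the proof of Corollary~3.4), together with an independent sanity check via the explicit $2\times 2$ matrix, which is a welcome but inessential addition.
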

\begin{corollary}
\textit{For each integer $n\geq2$;$$\omega_{2}({\bbS}^{\ast}_{n})=\cos\frac{\pi}{n+1}~~.$$}
\end{corollary}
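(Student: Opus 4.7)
The plan is to derive this as an immediate specialization of Theorem 3.7(2) to the case $\alpha = 0$. When $\alpha = 0$ the inner function $\phi(z) = \left(\frac{z-\alpha}{1-\overline{\alpha}z}\right)^n$ collapses to $\phi(z) = z^n$, so $H(\phi)$ is spanned by $\{1, z, \ldots, z^{n-1}\}$ and $\bbs(\phi)$ is exactly the truncated adjoint shift $\bbS^*_n$ (this is precisely the model identification recalled in the introduction).

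The remaining task is simply to evaluate the sandwich $m_n(0) \leq \omega_2(\bbs(\phi)) \leq M_n(0)$ provided by Theorem 3.7(2), and to verify that both bounds collapse to $\cos\frac{\pi}{n+1}$. For this I would invoke Remark 3.1, which identifies the zeros $t_k^{(n)}$ of $p_n$ explicitly as $t_k^{(n)} = \frac{k\pi}{n+1}$ when $\alpha = 0$. Plugging $\alpha = 0$ into the definitions of $m_n(\alpha)$ and $M_n(\alpha)$, all denominators reduce to $1$ and the numerators become $|\cos t_1^{(n)}|$ and $-\cos t_n^{(n)}$; since $\cos\frac{\pi}{n+1} = -\cos\frac{n\pi}{n+1}$, both quantities simplify to
\[
m_n(0) = M_n(0) = \cos\frac{\pi}{n+1}.
\]

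The squeeze then yields $\omega_2(\bbS^*_n) = \omega_2(\bbs(\phi)) = \cos\frac{\pi}{n+1}$, completing the proof. There is no substantive obstacle here: the content of the corollary has been transported entirely into Theorem 3.7 together with the explicit knowledge of the KMS spectrum at $\alpha = 0$. The only care required is the bookkeeping to confirm that $m_n(0)$ and $M_n(0)$ both reduce to the same value, and to notice that $\omega_2(\bbs(\phi)) = \omega_2(\bbS^*_n)$ follows from the unitary equivalence between $\bbs_{|H(z^n)}$ and $\bbS^*_n$.
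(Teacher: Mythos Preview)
Your proposal is correct and follows exactly the paper's approach: the paper simply notes that the corollary holds because the bounds $m_n$ and $M_n$ from Theorem 3.7(2) coincide at $\alpha=0$, and you have carried out this verification explicitly (invoking Remark 3.1 for $t_k^{(n)}=\tfrac{k\pi}{n+1}$ and checking that both maxima reduce to $\cos\tfrac{\pi}{n+1}$).
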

In the last two corollaries, the results are due to the fact that both quantities $m_{n}$ and $M_{n}$ coincide.

\underline{Acknowledgements:} The author would like to express his gratitude to Gilles Cassier for his help and his good advices 
and Alfonso Montes Rodriguez as well for advices in the translation of this work..

\end{document}